\algnewcommand\OR{\textbf{or}}
\algnewcommand\AND{\textbf{and}}
   \newtheorem{theorem}{Theorem}[section]
   \newtheorem{lemma}[theorem]{Lemma}
   \newtheorem{corollary}[theorem]{Corollary}
   \newtheorem{definition}[theorem]{Definition}
   \newtheorem{example}[theorem]{Example}
   \newtheorem{remark}[theorem]{Remark}
\begin{document}

\title{Invariance Conditions for Nonlinear Dynamical Systems}

\author{Zolt\'{a}n Horv\'{a}th\thanks{Department of Mathematics and Computational Sciences, Sz\'{e}chenyi Istv\'{a}n
University, 9026 Gy\H{o}r, Egyetem t\'{e}r 1, Hungary, Email: horvathz@sze.hu.}
\and Yunfei Song\thanks{Department of Industrial and Systems
Engineering, Lehigh University, 200 West Packer Avenue, Bethlehem,
PA, 18015-1582. Email: yus210@lehigh.edu.}
\and Tam\'{a}s
Terlaky\thanks{Department of Industrial and Systems Engineering,
Lehigh University, 200 West Packer Avenue, Bethlehem, PA,
18015-1582. Email: terlaky@lehigh.edu.}}

\maketitle

\begin{abstract}
Recently, Horv\'{a}th, Song, and Terlaky [\emph{A novel unified approach to invariance condition of dynamical system, 
submitted to Applied Mathematics and Computation}] proposed a novel unified approach to study, i.e., invariance conditions, sufficient and necessary conditions, 
 under which some convex sets are invariant sets for linear dynamical systems. 

In this paper, by utilizing analogous methodology,  we generalize the results for nonlinear dynamical systems. 
First, the Theorems of Alternatives, i.e., the nonlinear Farkas lemma and the \emph{S}-lemma, together with 
Nagumo's Theorem  are utilized to derive invariance conditions for discrete and continuous systems. 
Only  standard assumptions are needed to establish invariance of  broadly used convex sets, 
including polyhedral and ellipsoidal sets. 
Second, we  establish an optimization framework to computationally verify the derived invariance conditions. Finally, we derive analogous  invariance conditions  without any conditions. 

\end{abstract}

\textbf{Keywords:} Invariant Set, Nonlinear Dynamical System, Polyhedral Set, Ellipsoid, Convex Set

\section{Introduction}
Positively invariant set is an important concept, and it has a wide range of applications in dynamical systems and control theory, see, e.g., \cite{Blanchini, boyd, luen, jinlai}. Let a state space and a dynamical system be given. A subset $\mathcal{S}$ in the state space is called a \emph{positively invariant set} of the dynamical system if any forward trajectory originated from $\mathcal{S}$ stays in $\mathcal{S}$. For the sake of simplicity, throughout the paper we use  \emph{invariant set} to refer to positively invariant set. Some classical examples of invariant sets are equilibria, limit cycles, etc (see \cite{tesc}). In higher dimensional spaces,  examples of invariant sets are e.g., invariant torus and chaotic attractor, see, e.g., \cite{tesc}. A similar concept in dynamical system is stability, which  is one of the most commonly studied property of invariant sets. Intuitively, an invariant set is referred to as stable if any trajectories starting close to it remain close to it, and unstable if they do not. Positively invariant set  is an important concept and an efficient tool for the design of controllers of constrained systems. For example, for a given closed-loop control system, the state and control constraints hold when the initial state is chosen from a certain positively invariant set, see, e.g., \cite{zhou}.

A fundamental question  is to develop efficient tools to verify if a given set is an invariant set for a given (discrete or continuous) dynamical system. Sufficient and necessary conditions under which a set is an invariant set for a dynamical system are important both from the theoretical and practical aspects. Such sufficient and necessary conditions are usually referred to as \emph{invariance conditions}, see, e.g., \cite{song1}. Invariance conditions can be considered as  special tools to study the relationship between the invariant set and the dynamical system. 
They also provide  alternative ways to design efficient algorithms to construct  invariant sets. 
Linear discrete and continuous  dynamical systems have been extensively  studied  in recent decades, since such systems have a wide range of applications in control theory, see, e.g., \cite{alil, bits2, krein}. Invariance condition for linear systems are relatively easy to derive while analogous conditions for nonlinear systems are more difficult to derive. 
 Convex sets are often chosen as candidates for invariant sets of linear dynamical systems. These sets include polyhedron, see, e.g., \cite{bits1, bits2, Blanchini}, polyhedral cone, see, e.g.,  \cite{horva2, murray}, ellipsoid, see, e.g., \cite{boyd,zhou}, and Lorenz cone, see, e.g., \cite{birkhoff, schne, stern}. Recently, a novel unified method is presented in \cite{song1} to derive invariance conditions for these classical sets   for both linear discrete  and linear continuous  dynamical systems. 
Invariant sets for nonlinear dynamical systems are more complicated to study. The localization problem of compact invariant sets for discrete nonlinear system is studied in \cite{kana}.  A simple criteria to verify if a general convex set is a robust control invariant set for a nonlinear uncertain system is presented in\cite{fiacc}.  Invariant set for discrete system is studied in \cite{kerr}, and an application to model predictive control is provided. The steplength threshold for preserving invariance of a set when applying a discretization method to continuous systems is studied in \cite{song2, song3}.

 In this paper, we present invariance conditions for some classical sets for nonlinear discrete and continuous dynamical systems. This is motivated by the fact that most problems in the real world are often  described by nonlinear systems rather than linear  systems. Therefore there is a need to investigate efficient invariance condition to verify sets to be invariant sets for a nonlinear dynamical system.  The main tools used to derive invariance conditions for discrete and continuous dynamical systems are the so called Theorems of Alternatives, e.g., Farkas lemma \cite{polik, Roos}, \emph{S}-Lemma \cite{polik, yaku}, and Nagumo Theorem \cite{Blanchini, nagu}, respectively. For each invariance condition, we also present an equivalent optimization problem, which provides the possibility to use current advanced optimization algorithms or software to verify the invariance  property of given sets. 
 
The main contribution of this paper is that we propose novel invariance conditions for general discrete and continuous systems using a novel and simple approach.  Our novel approach establishes a close connection between the theory of invariant sets and optimization theory, as well as provides the possibility of using current advanced optimization algorithms and methodology to solve invariant set problems.   


\emph{Notation and Conventions.} To avoid unnecessary repetitions,
the following notations and conventions are used in this paper. The $i$-th row of a matrix $G$ is denoted by $G_i^T.$ The interior and the boundary of a set $\mathcal{S}$ is denoted by
int$(\mathcal{S})$ and  $\partial \mathcal{S}$, respectively.
  The index set $\{1,2,...,n\}$ is
denoted by $\mathcal{I}(n).$

\section{Preliminaries}
In this paper, we consider the following discrete and continuous dynamical systems:
\begin{equation}\label{slnlsdis}
x_{k+1}=f_d(x_k),
\end{equation}
\begin{equation}\label{slnlscon}
\dot{x}(t)=f_c(x(t)),
\end{equation}
where $x_k,x\in \mathbb{R}^n$ are  \emph{state variables}, and $f_d,f_c: \mathbb{R}^n\rightarrow \mathbb{R}^n$ are continuous differentiable functions. When $f_d(x)=Ax$ (or $f_c(x)=Ax$) with $A$ being an $n$ by $n$ matrix, then (\ref{slnlsdis}) (or (\ref{slnlscon})) is a linear discrete (or continuous) dynamical system.

\begin{definition}\label{def1}
A set $\mathcal{S}$ is an  invariant  set for
the discrete system (\ref{slnlsdis}) if  $ x_k\in \mathcal{S}$
implies $ x_{k+1}\in \mathcal{S}$ for all
 $k\in \mathbb{N}$.
A set $\mathcal{S}$ is an invariant set for
the continuous system (\ref{slnlscon}) if $x(0)\in \mathcal{S}$ implies
$x(t)\in\mathcal{S}$  for all $t\geq0$.
\end{definition}

A \emph{polyhedron}\footnote{For the sake of simplicity, we assume that there exists an interior point in the polyhedron. }, denoted by $\mathcal{P}\in \mathbb{R}^n$, is represented as 
$\mathcal{P}=\{x\in \mathbb{R}^n\,|\,Gx\leq b\},
$
where $G\in \mathbb{R}^{m\times n} $ and $ b\in \mathbb{R}^m$. 
An \emph{ellipsoid}, denoted by $\mathcal{E}\in \mathbb{R}^n$, centered at the
origin is defined as 
$\mathcal{E}=\{x\in\mathbb{ R}^n \,|\, x^TQx\leq 1\},$
where  $Q\in \mathbb{R}^{n\times n}$ and $Q\succ0$. Note that any ellipsoid with nonzero center can be transformed to an ellipsoid centered at the origin, see, e.g., \cite{harris}. A set $\mathcal{S}$ is said to be \emph{convex} if $\alpha x+(1-\alpha)y\in \mathcal{S}$ for any $x,y\in \mathcal{S}$ and $\alpha\in[0,1].$ One can show that any polyhedra and ellipsoids are both convex sets.

The following nonlinear Farkas lemma \cite{polik} and \emph{S}-lemma \cite{polik, yaku}, which are also refereed to as the Theorems of Alternatives,  are extensively studied in the optimization community. In this paper, we apply these two lemmas as our  tools to derive invariance conditions of sets for discrete systems. 
\begin{theorem}\label{nlf}
\emph{\textbf{(nonlinear Farkas lemma\footnote{The Slater condition means that there exists a $\hat{y}\in \mathbb{R}^n$, such that $g_j(\hat{y})\leq0$ for all $j$ when $g_j(x)$ is linear, and $ g_j(\hat{y})<0$ for all $j$ when $g_j(x)$ is nonlinear.} \cite{polik})}} Let $h(y),g_1(y),g_2(y),...,g_m(y):\mathbb{R}^n\rightarrow\mathbb{R}$ be convex functions. Assume that the Slater condition is satisfied.  Then the following two statements are equivalent:
\begin{itemize}
\item The inequality systems $h(y)<0, ~g_j(y)\leq 0, j=1,2,...,m$ have no solution.
\item There exist $\beta_1,\beta_2,...,\beta_m\geq0$, such that $h(y)+\sum\limits_{j=1}^m\beta_jg_j(y)\geq0$ for all $y\in \mathbb{R}^n.$
\end{itemize}
\end{theorem}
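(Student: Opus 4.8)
The plan is to prove the two statements equivalent via convex duality, i.e., by applying a separating hyperplane theorem to an auxiliary convex set in $\mathbb{R}^{m+1}$. The direction ``second $\Rightarrow$ first'' is immediate: if $\beta_1,\dots,\beta_m\ge 0$ satisfy $h(y)+\sum_{j=1}^m\beta_j g_j(y)\ge 0$ for all $y$, then any $y$ with $g_j(y)\le 0$ for all $j$ would satisfy $h(y)\ge h(y)+\sum_{j=1}^m\beta_j g_j(y)\ge 0$, so $h(y)<0$ cannot hold; thus the first system is infeasible.

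For the converse, I would introduce the set
\[
\mathcal{A}=\bigl\{(s,t_1,\dots,t_m)\in\mathbb{R}^{m+1}\mid \exists\, y\in\mathbb{R}^n:\ h(y)<s,\ g_j(y)\le t_j,\ j=1,\dots,m\bigr\}.
\]
Convexity of $h,g_1,\dots,g_m$ makes $\mathcal{A}$ convex; it is clearly nonempty and ``upward closed'' (increasing any coordinate of a point of $\mathcal{A}$ keeps it in $\mathcal{A}$, so $\mathcal{A}$ has nonempty interior). The assumption that $h(y)<0,\ g_j(y)\le 0$ has no solution says exactly that $\mathbf{0}\notin\mathcal{A}$. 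The separating hyperplane theorem then yields a nonzero vector $(\mu_0,\mu_1,\dots,\mu_m)$ with $\mu_0 s+\sum_{j=1}^m\mu_j t_j\ge 0$ for all $(s,t_1,\dots,t_m)\in\mathcal{A}$; upward closedness forces $\mu_0\ge 0$ and $\mu_j\ge 0$ for all $j$ (else the left-hand side is unbounded below on $\mathcal{A}$).

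The crux — and the step I expect to be the main obstacle — is to rule out $\mu_0=0$, which is exactly where the Slater condition enters. If $\mu_0=0$, then evaluating the separating inequality on points of $\mathcal{A}$ built from a Slater point $\hat y$ (first coordinate slightly above $h(\hat y)$, remaining coordinates $g_j(\hat y)$) gives $\sum_{j=1}^m\mu_j g_j(\hat y)\ge 0$; this contradicts $\mu\neq\mathbf{0}$, $\mu_j\ge 0$, together with $g_j(\hat y)\le 0$ for the linear constraints and $g_j(\hat y)<0$ for the nonlinear ones. (The mixed linear/nonlinear case needs the standard refinement: restrict to the affine hull of the linear constraints and take a relative-interior Slater point, so that the leftover positive multipliers cannot all sit on redundant linear constraints.) Granting $\mu_0>0$, normalize $\mu_0=1$ and put $\beta_j:=\mu_j\ge 0$. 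For any $y$ and any $\varepsilon>0$, the point $(h(y)+\varepsilon,g_1(y),\dots,g_m(y))$ lies in $\mathcal{A}$, hence $h(y)+\varepsilon+\sum_{j=1}^m\beta_j g_j(y)\ge 0$; letting $\varepsilon\downarrow 0$ gives $h(y)+\sum_{j=1}^m\beta_j g_j(y)\ge 0$ for all $y\in\mathbb{R}^n$, which is the second statement. Equivalently, the whole argument may be phrased as strong Lagrangian duality for the convex program $\inf\{h(y):g_j(y)\le 0\}$, whose optimal value is $\ge 0$ by hypothesis and whose dual optimum is attained under Slater; the explicit hyperplane construction above is simply the most transparent way to see where the Slater condition is needed.
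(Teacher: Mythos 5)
The paper does not actually prove this statement: Theorem~\ref{nlf} is imported as a preliminary, with a citation to the literature, and is then only \emph{applied} in Section~\ref{dis}. So there is no in-paper argument to compare against; your proposal has to stand on its own. On its own terms it is the standard and correct route: the easy direction is right; the auxiliary set $\mathcal{A}$ is convex and upward closed; infeasibility of the first system is exactly $\mathbf{0}\notin\mathcal{A}$; separation gives $(\mu_0,\mu_1,\dots,\mu_m)\neq\mathbf{0}$ with $\mu_0 s+\sum_j\mu_j t_j\ge 0$ on $\mathcal{A}$; unboundedness along the recession directions forces $\mu_0,\mu_j\ge 0$; and the final normalization plus $\varepsilon\downarrow 0$ step is clean. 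For the case in which every $g_j$ is nonlinear, so that the Slater point satisfies $g_j(\hat{y})<0$ for all $j$, your argument ruling out $\mu_0=0$ is complete, and the proof is done.

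The one genuine gap is the mixed linear/nonlinear case, which the footnote's version of the Slater condition explicitly allows: for linear $g_j$ one only has $g_j(\hat{y})\le 0$. Then $\mu_0=0$ yields $\sum_j\mu_j g_j(\hat{y})\ge 0$, which is \emph{not} a contradiction when all the strictly positive multipliers sit on linear constraints that happen to be active at $\hat{y}$ --- and such constraints need not be redundant, so the parenthetical remark about ``redundant linear constraints'' and passing to an affine hull does not yet dispose of this case. The standard repairs are either an inductive elimination of the affine constraints one at a time (absorbing an active affine constraint into the remaining system and invoking the linear Farkas lemma on the polyhedron the affine constraints define), or an appeal to proper separation of $\mathcal{A}$ from the nonpositive orthant using relative interiors, after which one argues the separating functional cannot vanish identically on the affine feasible set. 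As written, the theorem in its full stated generality is asserted rather than proved; the rest of the argument is correct.
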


\begin{theorem}\label{slemma}
\emph{\textbf{(\textit{S}-lemma \cite{polik, yaku})}}  Let
$h(y),g(y):\mathbb{R}^n\rightarrow \mathbb{R}$ be quadratic functions,
and suppose that there is a $\hat{y}\in \mathbb{R}^n$ such that
$g(\hat{y})<0$. Then the following two statements are equivalent:
\begin{itemize}
  \item The inequality system  $h(y)<0, g(y)\leq0$ has no solution.
  \item There exists a scalar $\beta\geq 0$, such that
  $h(y)+\beta g(y)\geq0,$ for all $y\in \mathbb{R}^n.$
\end{itemize}
\end{theorem}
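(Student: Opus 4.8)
The easy half, that (ii) implies (i), I would dispatch in a line: if $\beta\ge 0$ and $h(y)+\beta g(y)\ge 0$ for every $y$, then any $y_{0}$ with $g(y_{0})\le 0$ satisfies $h(y_{0})\ge -\beta g(y_{0})\ge 0$, so the system $h(y)<0,\ g(y)\le 0$ is infeasible. The content of the lemma is the converse. The nonlinear Farkas lemma stated above is of no help here, since a quadratic $h$ or $g$ need not be convex; what does the job is the classical convexity of the joint range of two homogeneous quadratic forms (Dines' theorem), applied after homogenizing $h$ and $g$, together with a separating hyperplane argument in $\mathbb{R}^{2}$ in which the Slater point $\hat y$ is decisive.

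Concretely, write $h(y)=y^{T}Ay+2a^{T}y+\alpha$ and $g(y)=y^{T}By+2b^{T}y+\gamma$ and homogenize: with $\bar y=(y,\tau)\in\mathbb{R}^{n+1}$, $\bar A=\begin{pmatrix}A&a\\ a^{T}&\alpha\end{pmatrix}$, and $\bar B=\begin{pmatrix}B&b\\ b^{T}&\gamma\end{pmatrix}$, one has $\bar y^{T}\bar A\bar y=h(y)$ and $\bar y^{T}\bar B\bar y=g(y)$ whenever $\tau=1$. The plan is to consider the cone $\mathcal C=\{(\bar y^{T}\bar A\bar y,\ \bar y^{T}\bar B\bar y):\bar y\in\mathbb{R}^{n+1}\}\subseteq\mathbb{R}^{2}$, which is convex by Dines' theorem (this is the key lemma I would cite or recall), and to show that it is disjoint from the convex set $Q=\{(u,v)\in\mathbb{R}^{2}:u<0,\ v\le 0\}$. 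Disjointness on the slice $\tau=1$ is exactly the hypothesis (i); for the homogeneous directions ($\tau=0$), i.e.\ the pairs $(x^{T}Ax,\,x^{T}Bx)$, I would rule out $x^{T}Ax<0,\ x^{T}Bx\le 0$ by evaluating $h$ and $g$ along the ray $\hat y+\sigma x$ for large $\sigma$: the dominant term of $h$ is $\sigma^{2}x^{T}Ax<0$, while $g(\hat y)<0$ keeps $g(\hat y+\sigma x)\le 0$ for $\sigma$ large (after replacing $x$ by $-x$ if the linear term has the wrong sign), once more contradicting (i).

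Granting this disjointness, separate the two convex sets; the separating hyperplane must pass through the origin, since $0\in\mathcal C$ and $0\in\overline Q$, so it is of the form $\{\lambda u+\mu v=0\}$ with $(\lambda,\mu)\ne(0,0)$, $\lambda u+\mu v\ge 0$ on $\mathcal C$, and $\lambda u+\mu v\le 0$ on $Q$. The last inequality forces $\lambda\ge 0$ and $\mu\ge 0$ (let $u\to-\infty$ with $v=0$, then $v\to-\infty$ with $u$ fixed negative), and evaluating $\lambda u+\mu v\ge 0$ at $\bar y=(y,1)$ yields $\lambda h(y)+\mu g(y)\ge 0$ for all $y$. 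Finally, $\lambda=0$ is impossible: it would give $\mu>0$ and hence $g(y)\ge 0$ for all $y$, contradicting $g(\hat y)<0$. Thus $\lambda>0$, and $\beta:=\mu/\lambda\ge 0$ is the required multiplier. I expect the main obstacle to be the convexity of the joint range $\mathcal C$, i.e.\ Dines' theorem, which is the nontrivial classical input; the other delicate point is the twofold use of the Slater point $\hat y$ — first to dispose of the recession (``directions at infinity'') part of the disjointness claim, and then to exclude the degenerate vertical separator $\lambda=0$.
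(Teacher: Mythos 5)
Theorem \ref{slemma} is quoted in the paper as a classical result from the cited literature (P\'olik--Terlaky, Yakubovich); the paper supplies no proof of its own, so there is nothing internal to compare against. Your blind proof is the standard argument for the nonstrict \emph{S}-lemma and it is correct: the trivial direction is handled properly; the homogenization, the appeal to Dines' theorem for convexity of the joint range $\mathcal C$ of the two homogeneous forms in $\mathbb{R}^{n+1}$, the two-case disjointness argument (the slice $\tau\neq 0$ reduces to hypothesis (i) by homogeneity, and the recession directions $\tau=0$ are excluded via the ray $\hat y+\sigma x$, with the sign flip $x\mapsto -x$ covering the case $x^TBx=0$ with nonzero linear term), the separation through the origin, and the exclusion of the degenerate separator $\lambda=0$ via the Slater point are all in order. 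You are right that the nonlinear Farkas lemma (Theorem \ref{nlf}) cannot substitute here because neither quadratic is assumed convex, and right to flag Dines' theorem as the one genuinely nontrivial external input; a fully self-contained write-up would have to prove that convexity statement, but citing it is exactly what the literature the paper references does as well.
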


The following Nagumo Theorem \cite{nagu} is a general theoretical result which can be considered as invariance condition of a closed and convex set for continuous systems. This theorem is chosen as a tool to derive the invariance condition of sets for continuous systems. 
\begin{theorem}\label{nagumo}
\emph{\textbf{(Nagumo  \cite{Blanchini, nagu})}} Let
$\mathcal{S}\subseteq\mathbb{R}^n$ be a closed convex set, and
assume that $\dot{x}(t)=f(x(t))$, where
$f:\mathbb{R}^n\rightarrow \mathbb{R}^n$ is a continuous function,
has a unique solution for every  $x(0)\in
\mathcal{S}$. Then $\mathcal{S}$ is an invariant set for this system
if and only if
\begin{equation}\label{cond3}
f(x)\in \mathcal{T}_\mathcal{S}(x),\text{  for all }  x\in \partial
\mathcal{S},
\end{equation}
where  $\mathcal{T}_\mathcal{S}(x)$ is the tangent cone\footnote{The tangent cone $\mathcal{T}_\mathcal{S}(x)$ is denoted as follows:  
$\mathcal{T}_\mathcal{S}(x)=\{y\in
\mathbb{R}^n\;|\;\underset{t\rightarrow0_+}{\lim\inf}\frac{{\text{dist}}(x+ty,\mathcal{S})}{t}=0\},
$
where   $\text{dist}(x,\mathcal{S})=\inf_{s\in\mathcal{S}}\|x-s\|.$} of
$\mathcal{S}$ at $x.$
\end{theorem}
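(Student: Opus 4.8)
The plan is to prove the two implications of the equivalence separately, first observing that at an interior point $x\in\mathrm{int}(\mathcal S)$ one has $\mathcal T_{\mathcal S}(x)=\mathbb R^n$, so condition (\ref{cond3}) is vacuous there; hence it is equivalent to require $f(x)\in\mathcal T_{\mathcal S}(x)$ for \emph{every} $x\in\mathcal S$, and only boundary points are binding.

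For necessity I would fix $x\in\partial\mathcal S$, let $x(\cdot)$ be the solution with $x(0)=x$, and combine invariance with differentiability of the solution: since $x(t)\in\mathcal S$ for all $t\ge0$ and $x(t)=x+tf(x)+o(t)$ as $t\to0_+$, the point $x(t)$ itself witnesses $\mathrm{dist}(x+tf(x),\mathcal S)\le\|x+tf(x)-x(t)\|=o(t)$, so $\liminf_{t\to0_+}\mathrm{dist}(x+tf(x),\mathcal S)/t=0$, which is precisely $f(x)\in\mathcal T_{\mathcal S}(x)$.

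Sufficiency is the substantive direction, and the plan there is a Lyapunov/Gr\"onwall argument built on the squared distance to $\mathcal S$. Given $x(0)\in\mathcal S$ and the solution $x(\cdot)$ on its maximal interval $[0,T)$, I would set $w(t):=\mathrm{dist}(x(t),\mathcal S)^2$, so $w(0)=0$ and $w\ge0$, and aim to show $w\equiv0$. Because $\mathcal S$ is closed and convex, the metric projection $\pi_{\mathcal S}$ is single-valued and $1$-Lipschitz and $x\mapsto\mathrm{dist}(x,\mathcal S)^2$ is $C^1$ with gradient $2(x-\pi_{\mathcal S}(x))$, so $w$ is $C^1$ on $[0,T)$ with $\tfrac12\dot w(t)=\langle x(t)-p(t),\,f(x(t))\rangle$, where $p(t):=\pi_{\mathcal S}(x(t))$. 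The variational inequality characterizing the projection puts $x(t)-p(t)$ in the normal cone $\mathcal N_{\mathcal S}(p(t))$, which for convex $\mathcal S$ is the polar cone of $\mathcal T_{\mathcal S}(p(t))$; since $p(t)\in\mathcal S$, condition (\ref{cond3}) gives $f(p(t))\in\mathcal T_{\mathcal S}(p(t))$ and hence $\langle x(t)-p(t),\,f(p(t))\rangle\le0$. Writing $f(x(t))=f(p(t))+[f(x(t))-f(p(t))]$ and bounding the second term by a local Lipschitz constant $L$ of $f$ on a compact neighborhood of the trajectory arc, I obtain $\tfrac12\dot w(t)\le L\|x(t)-p(t)\|^2=L\,w(t)$; Gr\"onwall's inequality with $w(0)=0$ then forces $w\equiv0$, i.e. $x(t)\in\mathcal S$ for all $t\in[0,T)$, so the solution never leaves $\mathcal S$.

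The hardest parts are all in the sufficiency argument. The key convex-analytic input is the polarity between $\mathcal T_{\mathcal S}(p)$ and $\mathcal N_{\mathcal S}(p)$ together with the fact that the projection residual lies in the normal cone — this is where convexity of $\mathcal S$ is genuinely used, and without it the tangent-cone condition cannot be converted into the sign inequality $\langle x(t)-p(t),f(p(t))\rangle\le0$. A secondary point is the $C^1$ regularity of $t\mapsto\mathrm{dist}(x(t),\mathcal S)^2$, which I would get from the $C^{1,1}$ smoothness of the squared distance to a convex set composed with the $C^1$ flow. Finally, the Gr\"onwall step requires a local Lipschitz bound on $f$; this is available under the standing $C^1$ assumption used throughout this paper, but for merely continuous $f$ — as in the bare statement — the clean differential inequality fails and one must instead approximate (perturbing $f$, or invoking the $\varepsilon$-approximate-solution and compactness constructions of Nagumo's original proof), which is the genuine technical core of the fully general theorem.
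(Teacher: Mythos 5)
The paper does not prove Theorem \ref{nagumo}: it is quoted from \cite{Blanchini, nagu} and used as a black-box tool, so there is no in-paper argument to compare yours against. Judged on its own terms, your outline is the standard modern proof and is essentially sound. The necessity direction is complete as written: since $\dot x(0)=f(x)$, the expansion $x(t)=x+tf(x)+o(t)$ holds and $x(t)\in\mathcal S$ witnesses that the $\liminf$ in the definition of $\mathcal T_{\mathcal S}(x)$ vanishes. For sufficiency, the chain of facts you invoke --- single-valuedness of the projection and $C^{1,1}$ regularity of the squared distance to a closed convex set, the inclusion $x-\pi_{\mathcal S}(x)\in\mathcal N_{\mathcal S}(\pi_{\mathcal S}(x))$, polarity of $\mathcal N_{\mathcal S}(p)$ and $\mathcal T_{\mathcal S}(p)$ for convex $\mathcal S$, and the reduction of \eqref{cond3} to all of $\mathcal S$ via $\mathcal T_{\mathcal S}(x)=\mathbb R^n$ on the interior --- is correct, and the Gr\"onwall step closes the argument.

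Two points should be made explicit before this counts as a full proof. First, the Lipschitz constant $L$ must control $f$ on a compact set containing both $x(t)$ and its projection $p(t)$; this is most cleanly arranged by showing the set $\{t\in[0,T)\,:\,w(t)=0\}$ is both closed and relatively open (running Gr\"onwall only on a short interval past each zero of $w$), rather than asserting a single $L$ for the whole arc. Second, and more substantively, the theorem as stated assumes only that $f$ is continuous with unique solutions, and under that hypothesis the differential inequality $\dot w\le 2Lw$ is unavailable; you correctly flag that the fully general statement requires Nagumo's original $\varepsilon$-approximate-solution and compactness construction, which your sketch does not supply. Since the paper's standing assumption is that $f_c$ is continuously differentiable, your Lipschitz-based argument covers every use the paper actually makes of the theorem, so with these caveats stated the proposal is acceptable as a proof of the version of the result the paper needs, though not of the literal statement for merely continuous $f$.
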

The geometric interpretation of Theorem \ref{nagumo} is clear, i.e., a set $\mathcal{S}$ is an invariant set for the continuous system if and only if the tangent line of the trajectory $x(t)$ cannot point out of its tangent cone. According to \cite{Blanchini}, we have that the Nagumo Theorem cannot be extended to discrete systems.

\section{Invariance Conditions for Discrete Systems}\label{dis}
In this section, under certain assumptions we present invariance conditions of polyhedral sets, ellipsoids, and convex sets for discrete systems. The introduction of these assumptions ensures that the Theorems of Alternative can be applied to derive invariance conditions. 
First,  an invariance condition of polyhedral sets for  discrete  systems is presented as follows.
\begin{theorem}\label{ch5:polythm}
Let  a polyhedron  $\mathcal{P}=\{x\,|\,Gx\leq b\}$, where $G\in \mathbb{R}^{m\times n}$ and $b\in \mathbb{R}^m,$ and  the discrete system be given as in   (\ref{slnlsdis}). Assume  that $b_i-G_i^Tf_d(x) $  are convex functions for all $i\in\mathcal{I}(m)$. Then $\mathcal{P}$ is an invariant set for the discrete
system (\ref{slnlsdis}) if and only if there exists a matrix $H\geq0$,
such that
\begin{equation}\label{slede911}
HGx-Gf_d(x)\geq Hb-b,~~ \text{ for all } x\in \mathbb{R}^n.
\end{equation}
\end{theorem}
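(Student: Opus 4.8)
The plan is to reduce the invariance of $\mathcal{P}$ to a single-step set inclusion, and then to apply the nonlinear Farkas lemma (Theorem~\ref{nlf}) to each of the $m$ defining inequalities separately. Since a forward trajectory of (\ref{slnlsdis}) satisfies $x_{k+1}=f_d(x_k)$, a routine induction shows that $\mathcal{P}$ is invariant if and only if $f_d(x)\in\mathcal{P}$ for every $x\in\mathcal{P}$; that is, for each $i\in\mathcal{I}(m)$ the implication $Gx\le b\Rightarrow G_i^Tf_d(x)\le b_i$ must hold. Passing to the contrapositive, this is equivalent to requiring that, for each $i$, the inequality system
\begin{equation*}
b_i-G_i^Tf_d(x)<0,\qquad G_j^Tx-b_j\le0\ \ (j\in\mathcal{I}(m))
\end{equation*}
be infeasible in $x\in\mathbb{R}^n$.

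Next I would invoke Theorem~\ref{nlf} with $h(x)=b_i-G_i^Tf_d(x)$ and $g_j(x)=G_j^Tx-b_j$. The function $h$ is convex by the hypothesis of the theorem, each $g_j$ is affine and hence convex, and the Slater condition holds because $\mathcal{P}$ is assumed to contain an interior point (any point of $\mathcal{P}$ already certifies $g_j\le0$ for all $j$, since the $g_j$ are linear). Therefore the infeasibility above is equivalent to the existence of multipliers $\beta_{i1},\dots,\beta_{im}\ge0$ with
\begin{equation*}
b_i-G_i^Tf_d(x)+\sum_{j=1}^m\beta_{ij}\bigl(G_j^Tx-b_j\bigr)\ge0\quad\text{for all }x\in\mathbb{R}^n.
\end{equation*}
Assembling the multipliers into a matrix $H$ with $(H)_{ij}=\beta_{ij}$, one has $H\ge0$, $\sum_j\beta_{ij}G_j^Tx=(HGx)_i$, and $\sum_j\beta_{ij}b_j=(Hb)_i$; hence the $m$ scalar inequalities (one per row $i$) are precisely the single vector inequality (\ref{slede911}). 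This proves the ``only if'' direction, and since every step is an equivalence the ``if'' direction follows as well. For the latter one can also argue directly: if $H\ge0$ satisfies (\ref{slede911}) and $x\in\mathcal{P}$, then $HGx\le Hb$, so (\ref{slede911}) rearranges to $Gf_d(x)\le HGx-Hb+b\le b$, i.e.\ $f_d(x)\in\mathcal{P}$.

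The conceptual work is entirely concentrated in the first two steps: recognizing that the invariance of a polyhedron amounts to the infeasibility of $m$ convex systems, and that the convexity hypothesis on $b_i-G_i^Tf_d(x)$ is exactly what makes the nonlinear Farkas lemma applicable. After that, the only points requiring care are the verification of the Slater condition (immediate here, thanks to the standing assumption that $\mathcal{P}$ has an interior point) and the bookkeeping that repackages the $m$ row-wise Farkas certificates $\beta_{ij}$ as the entries of a single nonnegative matrix $H$. I do not anticipate a substantive obstacle beyond this.
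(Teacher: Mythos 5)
Your proposal is correct and follows essentially the same route as the paper: reduce invariance to the inclusion $\mathcal{P}\subseteq\{x\,|\,Gf_d(x)\le b\}$, express that as infeasibility of $m$ convex systems, apply the nonlinear Farkas lemma row by row, and assemble the multipliers into the nonnegative matrix $H$. Your explicit verification of the Slater condition and the direct check of the sufficiency direction are welcome additions, but they do not change the argument in substance.
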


\begin{proof}
 We have that  $\mathcal{P}$ is an invariant set for the discrete system (\ref{slnlsdis}) if and only if $\mathcal{P}\subseteq \mathcal{P}'=\{x\,|\,Gf_d(x)\leq b\}$. The latter one means that for every $i\in\mathcal{I}(m),$ the system $G_i^Tf_d(x)>b_i$ and $Gx\leq b$  has no solution. Let us assume to the contrary that there exists an $x^*$ and $i^*$, such that $G_{i^*}^Tf_d(x^*)>b_{i^*}$ and $Gx^*\leq b.$ Then we have $x^*\in \mathcal{P}$ but $x^*\notin\mathcal{P}'$, which contradicts to $\mathcal{P}\subseteq\mathcal{P}'.$  Also, since
$b_i-G_i^Tf_d(x)$  is a convex function,
then, according to the convex Farkas Lemma  \ref{nlf}, we have that there exists a vector $H_i\geq0$ and $H_i\in \mathbb{R}^n,$ such that
\begin{equation*}
b_i-G_i^Tf_d(x)+H_i^T(Gx-b)\geq0, ~~~\text{ for all } x\in\mathbb{R}^n.
\end{equation*}
Writing $H_i^T$ for all $i\in \mathcal{I}(m)$ together into a matrix $H,$  we have  $H\geq0$ and
\begin{equation*}
b-Gf_d(x)+H(Gx-b)\geq0, ~~ \text{ for all } x\in \mathbb{R}^n,
\end{equation*}
which is the same as (\ref{slede911}).
\end{proof}


One can use algebraic method to verify if condition (\ref{slede911}) holds when $f_d(x)$ is given. The algebraic method may be very challenge. Here  we present the following optimization methodology to equivalently solve condition (\ref{slede911}).  

\begin{remark}
Consider the following  $m$ optimization problems
\begin{equation}\label{sdopt351}
\max_{H_i\geq0}\min_{x\in\mathbb{R}^n}\{H_i^TGx-G_i^Tf_d(x)-H_i^Tb+b_i\} \text{ for all } i\in\mathcal{I}(m).
\end{equation}
If the global optimal objective values of the $m$ optimization problems in (\ref{sdopt351}) are all  nonnegative, then we can claim that condition (\ref{slede911}) holds.
\end{remark}



In Theorem \ref{ch5:polythm}, we do not specifically assume that the system is a linear or a nonlinear system.  If the  system in Theorem \ref{ch5:polythm} is a linear dynamical system, then we  have the following corollary, which is an invariance condition of polyhedral sets for linear systems. Note that Corollary \ref{chp5:cor1}  can also be referred to \cite{song1}. An alternative proof for  Corollary \ref{chp5:cor1}, using optimality conditions is presented in the Appendix.

\begin{corollary}\label{chp5:cor1} \emph{\cite{bits2, song1}}
Let  a polyhedron  $\mathcal{P}=\{x\,|\, Gx\leq b\}$ where $G\in \mathbb{R}^{m\times n}$ and $b\in \mathbb{R}^m$ be given, and  the discrete system given  in   (\ref{slnlsdis}) be linear, i.e., $f_d(x)=A_dx$, where $A_d\in \mathbb{R}^{n\times n}$. Then $\mathcal{P}$ is an invariant set of the discrete
system (\ref{slnlsdis}) if and only if there exists a matrix $H\in \mathbb{R}^{m\times m}$ and\footnote{Here $H\geq0$ means that all the entries of $H$ are nonnegative.} $H\geq0$, such that $HG=GA$ and $Hb\leq b.$
\end{corollary}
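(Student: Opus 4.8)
The plan is to obtain Corollary \ref{chp5:cor1} as a direct specialization of Theorem \ref{ch5:polythm}. First I would note that when $f_d(x)=A_dx$ is linear, each function $b_i-G_i^Tf_d(x)=b_i-G_i^TA_dx$ is affine in $x$, hence convex; thus the standing hypothesis of Theorem \ref{ch5:polythm} holds automatically, and the theorem applies with no further assumptions. Consequently $\mathcal{P}$ is an invariant set for the linear discrete system if and only if there exists $H\geq0$ with $HGx-GA_dx\geq Hb-b$ for all $x\in\mathbb{R}^n$, i.e., $(HG-GA_d)x\geq Hb-b$ for all $x\in\mathbb{R}^n$.

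The remaining step is to show that this ``for all $x$'' affine inequality is equivalent to the algebraic conditions $HG=GA_d$ and $Hb\leq b$. For this I would invoke the elementary fact that a scalar affine inequality $v^Tx\geq c$ holds for every $x\in\mathbb{R}^n$ precisely when $v=0$ and $c\leq0$ (if $v\neq0$, take $x=-tv$ with $t\to+\infty$). Applying this row by row to $(HG-GA_d)x\geq Hb-b$ forces every row of $HG-GA_d$ to vanish and every entry of $Hb-b$ to be nonpositive, that is, $HG=GA_d$ and $Hb\leq b$. Since this reduction is reversible, the existence of an $H\geq0$ satisfying (\ref{slede911}) is equivalent to the existence of an $H\geq0$ with $HG=GA_d$ and $Hb\leq b$, which is exactly the claim.

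As an alternative I could argue from scratch, without Theorem \ref{ch5:polythm}: $\mathcal{P}$ is invariant if and only if for each $i$ the linear program $\max\{\,G_i^TA_dx : Gx\leq b\,\}$ has optimal value at most $b_i$; since $\mathcal{P}$ has an interior point the primal is feasible, so an unbounded primal would already contradict invariance, and a finite optimum equals $\min\{\,b^Ty : y\geq0,\ G^Ty=A_d^TG_i\,\}$ by LP strong duality. Stacking the optimal dual vectors $y$ as the rows of $H$ yields $H\geq0$, $HG=GA_d$, and $Hb\leq b$. This is essentially the classical Bitsoris argument and is presumably close to the optimality-condition proof promised in the Appendix, but it duplicates effort already spent on Theorem \ref{ch5:polythm}.

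I do not anticipate a real obstacle. The only two points deserving an explicit line are the observation that affine functions meet the convexity hypothesis of Theorem \ref{ch5:polythm} (so that it applies with no side conditions), and the standard characterization of affine inequalities valid on all of $\mathbb{R}^n$; everything else is bookkeeping with the matrix $H$ assembled from its rows $H_i^T$.
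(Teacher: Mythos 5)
Your proposal is correct and follows exactly the paper's route: affine functions are convex, so Theorem \ref{ch5:polythm} applies, and the resulting global affine inequality $(HG-GA_d)x\geq Hb-b$ forces $HG=GA_d$ and $Hb\leq b$ (a step the paper leaves as ``one can easily show'' and you spell out). Your alternative LP-duality argument is precisely the optimality-condition proof given in the paper's Appendix, so nothing further is needed.
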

\begin{proof}
Since the system is linear, $b_i-G_i^TAx$ are convex functions for all $i\in\mathcal{I}(m).$ According to Theorem
\ref{ch5:polythm}, there exists a matrix $H\geq0,$ such that condition (\ref{slede911}) holds, i.e.,
\begin{equation}\label{slede4f1}
(HG-GA)x\geq Hb-b,~~ \text{ for all } x\in \mathbb{R}^n.
\end{equation} 
Note that $(\ref{slede4f1})$ holds for all $x\in \mathbb{R}^n$. One can easily show that (\ref{slede4f1}) is equivalent to $HG=GA$ and $Hb\leq b$. The proof is complete.
\end{proof}

\textbf{In Theorem \ref{ch5:polythm}, we have the condition that $b_i-G_i^Tf_d(x)$ are convex function for all $i$. Recall that a function, which is twice differentiable, is convex if and only if its Hessian is positive semidefinite for all $x$. Thus, to verify  $b_i-G_i^Tf_d(x)$ are convex, one can check if $G_i^T\nabla^2 f(x)\preceq0$ for all $x\in \mathbb{R}^n.$ }
We now give an example to illustrate Theorem \ref{ch5:polythm}. 
\begin{example}
Let the discrete system be given as $\xi_{k+1}=-\xi_{k}+2\eta_{k}-\xi_{k}^2, \eta_{k+1}=-2\xi_{k}-\eta_{k}+\eta_{k}^2, $ and the polyhedron be given as $\mathcal{P}=\{(\xi,\eta)\,|\,\xi-\eta\leq -10, 2\xi-\eta\leq 10, \xi-2\eta\leq-20\}.$
\end{example}

We first directly show that $\mathcal{P}$ is an invariant set for the discrete system, i.e., $(\xi_{k+1},\eta_{k+1})\in\mathcal{P}$ for all $(\xi_k,\eta_k)\in \mathcal{P}.$ For simplicity, we only prove the first constraint, i.e., $\xi_{k+1}-\eta_{k+1}\leq -10$.  In fact, we have
$\xi_{k+1}-\eta_{k+1}=-\xi_k^2-\eta_k^2+\xi_k+3\eta_k=-\xi_k^2-(\eta_k-2.5)^2+\xi_k-2\eta_k+6.25
\leq \xi_k-2\eta_k+6.25\leq -20+6.25\leq -10.$
The other two constraints can be proved in a similar manner. On the other hand,  one can show that the assumption in Theorem \ref{ch5:polythm} is satisfied for this example. Then we can find a suitable $H\geq0$ such that condition (\ref{slede911}) holds. One can easily verify that $H=[0,0,1;0,0,0;1,0,1]$ satisfies condition (\ref{slede911}). Then  according to Theorem \ref{ch5:polythm}, we have that $\mathcal{P}$ is an invariant set for the discrete system.\\

We now consider an invariance condition for ellipsoids for the discrete  system (\ref{slnlsdis}).

\begin{theorem}\label{ch5:ellipthm}
Let  an ellipsoid   $\mathcal{E}=\{x\,|\,x^TQx\leq 1\}$, where $Q\in \mathbb{R}^{n\times n}$ and $Q\succ0$, and the  discrete system be given as in   (\ref{slnlsdis}).  Assume that $(f_d(x))^TQf_d(x)$ is a concave function.  Then $\mathcal{E}$ is an invariant
set for the discrete  system (\ref{slnlsdis})  if and only if there exists a $\beta\geq0$, such that
\begin{equation}\label{ch5:ellip3}
\beta x^TQx-(f_d(x))^TQf_d(x)\geq \beta-1, ~~~\text{ for all } x\in \mathbb{R}^n.
\end{equation}
\end{theorem}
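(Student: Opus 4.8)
The plan is to run the same argument as in the proof of Theorem~\ref{ch5:polythm}, with the single quadratic inequality $x^TQx\le 1$ playing the role of the $m$ linear inequalities $Gx\le b$, and with one application of the nonlinear Farkas lemma (Theorem~\ref{nlf}) in place of the componentwise application used for $\mathcal{P}$. First I would reduce invariance to a set inclusion: exactly as for $\mathcal{P}$, the set $\mathcal{E}$ is invariant for (\ref{slnlsdis}) if and only if $\mathcal{E}\subseteq\mathcal{E}'$, where $\mathcal{E}'=\{x\,|\,(f_d(x))^TQf_d(x)\le 1\}$; the direction ``$\mathcal{E}$ invariant $\Rightarrow\mathcal{E}\subseteq\mathcal{E}'$'' is immediate from $x_{k+1}=f_d(x_k)$, and the converse follows by an obvious induction on $k$ along the discrete trajectory, using Definition~\ref{def1}. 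Then $\mathcal{E}\subseteq\mathcal{E}'$ fails precisely when some $x^*$ satisfies $(x^*)^TQx^*\le 1$ and $(f_d(x^*))^TQf_d(x^*)>1$; equivalently, $\mathcal{E}\subseteq\mathcal{E}'$ holds if and only if the system
\[
1-(f_d(x))^TQf_d(x)<0,\qquad x^TQx-1\le 0
\]
has no solution $x\in\mathbb{R}^n$.

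Next I would set $h(x)=1-(f_d(x))^TQf_d(x)$ and $g(x)=x^TQx-1$ and apply Theorem~\ref{nlf} with $m=1$. The hypotheses are exactly met: $g$ is convex because $Q\succ 0$, and $h$ is convex precisely because $(f_d(x))^TQf_d(x)$ is assumed concave; the Slater condition holds at $\hat y=0$ since $g(0)=-1<0$. Hence Theorem~\ref{nlf} yields that the above system is inconsistent if and only if there is a scalar $\beta\ge 0$ with $h(x)+\beta g(x)\ge 0$ for all $x\in\mathbb{R}^n$, i.e.
\[
1-(f_d(x))^TQf_d(x)+\beta\bigl(x^TQx-1\bigr)\ge 0\quad\text{for all }x\in\mathbb{R}^n,
\]
which rearranges term by term into (\ref{ch5:ellip3}). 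Combining this with the reduction above completes the proof.

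The only genuine subtlety --- and the reason the concavity hypothesis appears --- is that the \emph{S}-lemma (Theorem~\ref{slemma}) cannot be invoked directly here: when $f_d$ is nonlinear, $(f_d(x))^TQf_d(x)$ need not be a quadratic function of $x$, so the quadratic Theorems of Alternatives do not apply, and one must route the argument through the \emph{convex} nonlinear Farkas lemma instead; the concavity of $(f_d(x))^TQf_d(x)$ is exactly what is needed to make $h$ convex so that Theorem~\ref{nlf} is applicable. Everything else --- the set-inclusion reformulation, the Slater check at the origin, and the final algebraic rearrangement --- is routine, so I do not expect any further obstacle.
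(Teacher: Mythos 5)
Your proposal is correct and follows essentially the same route as the paper's own proof: reduce invariance to the inclusion $\mathcal{E}\subseteq\mathcal{E}'$, restate that inclusion as the inconsistency of the system $1-(f_d(x))^TQf_d(x)<0$, $x^TQx-1\le 0$, and apply the nonlinear Farkas lemma with $h$ convex by the concavity hypothesis and $g(x)=x^TQx-1$ convex. Your explicit verification of the Slater condition at $\hat y=0$ is a small but welcome addition that the paper leaves implicit.
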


\begin{proof}
The ellipsoid $\mathcal{E}$ is an invariant set for the discrete system if and only if $\mathcal{E}\subseteq\mathcal{E}',$ where $\mathcal{E}'=\{x\,|\,(f_d(x))^TQf_d(x)\leq1\}.$ We also note that $\mathcal{E}\subseteq\mathcal{E}'$ is equivalent to $(\mathbb{R}^n\setminus\mathcal{E}')\cap\mathcal{E}=\emptyset,$ i.e., the inequality system $1-(f_d(x))^TQf_d(x)<0$ and $x^TQx-1\leq0$ has no solution.
Since $(f_d(x))^TQf_d(x)$  is a concave function,
 we have that $1-(f_d(x))^TQf_d(x)$ is a convex function. Note that $x^TQx-1$ is also a convex function, according to Theorem \ref{nlf}, there exists a $\beta\geq0$, such that
$$
-(f_d(x))^TQf_d(x)+1 +\beta(x^TQx-1)\geq0, ~~~\text{ for all } x\in \mathbb{R}^n,
$$
which is the same as (\ref{ch5:ellip3}).
\end{proof}

\begin{remark}
If we choose $x=0$ in condition (\ref{ch5:ellip3}), then we have $\beta\leq 1-(f_d(0))^TQf_d(0),$ which can be considered as an upper bound of $\beta.$
\end{remark}

Similarly, we present the following optimization problem which is equivalent to condition  (\ref{ch5:ellip3}).
\begin{remark}
Consider the following  optimization problem
\begin{equation}\label{slvn311}
\max_{\beta\geq0}\min_{x\in\mathbb{R}^n}\{\beta x^TQx-(f_d(x))^TQf_d(x)- \beta+1\}.
\end{equation}
If the optimal objective value of   optimization problem (\ref{slvn311}) is nonnegative, then condition (\ref{ch5:ellip3}) holds.
\end{remark}

If the  system in Theorem \ref{ch5:ellipthm} is a linear dynamical system, then we  have the following corollary, which is an invariance condition of ellipsoids for linear system. Note that Corollary \ref{ch5:cord2}  can also be referred to \cite{song1}.

\begin{corollary}\label{ch5:cord2}
\emph{\cite{song1}}
Let  an ellipsoid  $\mathcal{E}=\{x\,|\,x^TQx\leq 1\}$, where $Q\in \mathbb{R}^{n\times n}$ and $Q\succ0$, and  a linear discrete system be given as in   (\ref{slnlsdis}), i.e., $f_d(x)=A_dx$, where $A_d\in \mathbb{R}^{n\times n}$. Then $\mathcal{E}$ is an invariant set for the discrete system (\ref{slnlsdis}), if and only if there exists a $\beta\in [0,1]$, such that
$
A_d^TQA_d-\beta Q\preceq0.
$
\end{corollary}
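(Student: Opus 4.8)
The plan is to specialize Theorem~\ref{ch5:ellipthm} to the linear case $f_d(x)=A_dx$ and then simplify the resulting inequality~(\ref{ch5:ellip3}) to the stated matrix inequality. First I would observe that when $f_d(x)=A_dx$, the function $(f_d(x))^TQf_d(x)=x^TA_d^TQA_dx$ is a quadratic form with Hessian $2A_d^TQA_d$; since $Q\succ0$, this form is in fact convex, hence (being a quadratic form) it is concave if and only if $A_d^TQA_d=0$. So the concavity hypothesis of Theorem~\ref{ch5:ellipthm} is not automatically satisfied, and I would instead invoke the $S$-lemma (Theorem~\ref{slemma}) directly: both $h(x)=1-x^TA_d^TQA_dx$ and $g(x)=x^TQx-1$ are quadratic, and the Slater point $\hat x=0$ satisfies $g(\hat x)=-1<0$, so the $S$-lemma applies without any concavity assumption. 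This gives that $\mathcal{E}\subseteq\mathcal{E}'$ (equivalently, invariance) holds if and only if there exists $\beta\ge0$ with
\begin{equation*}
1-x^TA_d^TQA_dx+\beta(x^TQx-1)\ge0,\qquad\text{for all }x\in\mathbb{R}^n.
\end{equation*}

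Next I would rewrite this as $x^T(\beta Q-A_d^TQA_d)x\ge\beta-1$ for all $x$, and extract the two conditions it encodes. Taking $x=0$ yields $\beta\le1$. Taking $x=tv$ for arbitrary $v$ and letting $t\to\infty$ forces the quadratic form $x^T(\beta Q-A_d^TQA_d)x$ to be bounded below, which for a quadratic form means it must be nonnegative, i.e. $\beta Q-A_d^TQA_d\succeq0$, equivalently $A_d^TQA_d-\beta Q\preceq0$. Conversely, if $A_d^TQA_d-\beta Q\preceq0$ and $0\le\beta\le1$, then $x^T(\beta Q-A_d^TQA_d)x\ge0\ge\beta-1$ for all $x$, so the inequality holds. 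Thus the inequality~(\ref{ch5:ellip3}) in the linear case is equivalent to the existence of $\beta\in[0,1]$ with $A_d^TQA_d-\beta Q\preceq0$, which is the claim.

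An alternative, more in the spirit of deriving this as a corollary of Theorem~\ref{ch5:ellipthm}: if one wants to stay within that theorem's framework, note that its proof only used the $S$-lemma applied to the pair $\big(1-(f_d(x))^TQf_d(x),\,x^TQx-1\big)$, and in the linear case these are genuinely quadratic, so the $S$-lemma applies regardless of concavity; the concavity assumption was only a device to route through the (more general) nonlinear Farkas lemma of Theorem~\ref{nlf}. So the corollary follows by replaying that proof with Theorem~\ref{slemma} in place of Theorem~\ref{nlf}, then performing the elementary reduction of the preceding paragraph.

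The main obstacle — really the only subtle point — is the concavity hypothesis: it is tempting to try to deduce the corollary mechanically from Theorem~\ref{ch5:ellipthm}, but $x^TA_d^TQA_dx$ is concave only in the degenerate case $A_d^TQA_d=0$, so one must instead appeal to the $S$-lemma directly (which is legitimate precisely because linearity of $f_d$ makes the relevant functions quadratic). The remaining steps — the $x=0$ evaluation giving $\beta\le1$ and the asymptotic argument giving the semidefiniteness — are routine facts about quadratic forms bounded below.
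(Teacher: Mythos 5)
Your proof is correct, and it actually diverges from the paper's own argument at the one point where the paper is on shaky ground. The paper's proof simply writes ``According to Theorem~\ref{ch5:ellipthm}, there exists $\beta\geq0$ such that $x^T(\beta Q-A_d^TQA_d)x\geq\beta-1$'' and never checks the concavity hypothesis; as you observe, $x^TA_d^TQA_dx$ is convex (concave only when $A_d^TQA_d=0$), so Theorem~\ref{ch5:ellipthm} does not literally apply in the linear case. Your substitute --- invoking the \emph{S}-lemma (Theorem~\ref{slemma}) directly on the quadratic pair $h(x)=1-x^TA_d^TQA_dx$, $g(x)=x^TQx-1$ with Slater point $\hat x=0$ --- is exactly the right repair, and it is consistent with the ``Moreover'' clause of Theorem~\ref{zhthm11}, which the authors themselves note drops the concavity requirement for quadratic data. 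From the inequality $x^T(\beta Q-A_d^TQA_d)x\geq\beta-1$ onward the two arguments coincide in substance: both get $\beta\leq1$ by taking $x=0$, and both show that a quadratic form bounded below must be positive semidefinite --- the paper by scaling an eigenvector associated with a negative eigenvalue, you by sending $x=tv$ with $t\to\infty$; these are the same computation. You also state the trivial converse ($A_d^TQA_d-\beta Q\preceq0$ and $\beta\leq1$ imply the inequality), which the paper leaves implicit. In short, your route buys correctness where the paper's citation of Theorem~\ref{ch5:ellipthm} is, as stated, inapplicable.
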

\begin{proof}
According to Theorem \ref{ch5:ellipthm}, we have that there exists a $\beta\geq0$, such that
\begin{equation}\label{eq:condt}
x^T(\beta Q-A_d^TQA_d)x\geq \beta-1,  \text{ for all }  x\in \mathbb{R}^n.
\end{equation}
If we choose $x=0$, then we have $\beta\leq1.$  Assume that $A_d^TQA_d-\beta Q\not\preceq0$, then there exists a negative eigenvalue $\lambda$ and the corresponding eigenvector $x^*\neq0$ such that $(\beta Q-A_d^TQA_d)x^*=\lambda x^*$, where $\lambda<0.$ Let  $y^*=\alpha x^*$ with $\alpha<\sqrt{\tfrac{{\beta-1}}{\lambda}}\tfrac{1}{\|x^*\|}$, then we have  $(y^*)^T(\beta Q-A_d^TQA_d)y^*<\beta-1$, which contradicts (\ref{eq:condt}). Thus, we have $A_d^TQA_d-\beta Q\preceq0.$
\end{proof}

Observe that parameter $\beta$ presented in Corollary \ref{ch5:cord2} can be eliminated. In fact, one can show that $A_d^TQA_d-\beta Q\preceq 0$ for $\beta\in[0,1]$ and $Q\succ0$ is equivalent to  $A_d^TQA_d-Q\preceq0,$ see \cite{song1}.

\textbf{In Theorem \ref{ch5:ellipthm}, we have the condition that $(f_d(x))^TQf_d(x)$ is a concave function. In fact, this is equivalent to verify if the Hessian of  $(f_d(x))^TQf_d(x)$ is negative semidefinite for all $x\in\mathbb{R}^n.$}

We now give an example to illustrate Theorem \ref{ch5:ellipthm}.

\begin{example}
Let the discrete system be $\xi_{k+1}=\tfrac{\sqrt{\xi_k+\eta_k}}{2}, \eta_{k+1}=\tfrac{\sqrt{\xi_k-3\eta_k}}{2},$ and the ellipsoid be given as $\mathcal{E}=\{(\xi,\eta)\,|\,\xi^2+\eta^2\leq1\}.$
\end{example}

For any $(\xi_k,\eta_k)\in \mathcal{E}$, we have $\xi_{k+1}^2+\eta_{k+1}^2=\frac{\xi_k-\eta_k}{2}\leq\tfrac{\sqrt{2}}{2}\sqrt{\xi_k^2+\eta_k^2}<1,$ which shows that $\mathcal{E}$ is an invariant set for the discrete system. On the other hand, let $f(x)=(f_1(x),f_2(x))^T=(\tfrac{\sqrt{\xi_k+\eta_k}}{2}, \tfrac{\sqrt{\xi_k-3\eta_k}}{2})^T$ and $Q=[1,0; 0,1]$. Then we have that $f(x)^TQf(x)$ is a concave function. If we choose $\beta=\tfrac{1}{4}$, then condition (\ref{ch5:ellip3}) yields $(\xi_k-1)^2+(\eta_k-1)^2+1\geq0$ for any $(\xi_k,\eta_k)\in \mathbb{R}^2.$ This, according to Theorem \ref{ch5:ellipthm},  also shows that  $\mathcal{E}$ is an invariant set for the discrete system.\\

We now consider invariance conditions for more general convex sets for discrete system (\ref{slnlsdis}). Let a  convex set be given as:
\begin{equation}\label{1zhcov1}
\mathcal{S}=\{x\in \mathbb{R}^n\,|\,g(x)\leq0\},
\end{equation}
where $g:\mathbb{R}^n\rightarrow \mathbb{R}$ is a convex function.
Then we have the following theorem, which gives invariance condition for the convex set (\ref{1zhcov1}) for discrete  system (\ref{slnlsdis}).

\begin{theorem}\label{zhthm11}
 Let the convex set $\mathcal{S}$ be given as in  (\ref{1zhcov1}),  and the discrete system be given as in (\ref{slnlsdis}). Assume that there exists $x^0\in \mathbb{R}^n$ such that $g(x)<0$, and that  $g(f_d(x))$ is a concave function. Then $\mathcal{S}$
is an invariant set for the discrete system  if and
only if there exists an $\alpha\geq0$, such that
\begin{equation}\label{zheq1211}
\alpha g(x)-g(f_d(x))\geq0, ~~\text{ for all } x\in \mathbb{R}^n.
\end{equation}
Moreover, if $g(x)$ and $g(f_d(x))$ are quadratic functions, then the assumption that $g(f_d(x))$ is a concave function is not required.
\end{theorem}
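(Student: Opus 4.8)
The plan is to mimic the structure of the proofs of Theorem \ref{ch5:polythm} and Theorem \ref{ch5:ellipthm}, replacing the scalar linear/quadratic data there by the general convex function $g$. First I would observe that $\mathcal{S}$ is an invariant set for the discrete system if and only if $\mathcal{S}\subseteq\mathcal{S}'$, where $\mathcal{S}'=\{x\,|\,g(f_d(x))\leq0\}$: indeed, $x_k\in\mathcal{S}$ forces $g(x_k)\leq0$, and $x_{k+1}=f_d(x_k)\in\mathcal{S}$ is precisely $g(f_d(x_k))\leq0$, so iterating gives the whole forward trajectory staying in $\mathcal{S}$ exactly when every point of $\mathcal{S}$ lands in $\mathcal{S}'$ after one step. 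Next I would reformulate $\mathcal{S}\subseteq\mathcal{S}'$ as the assertion that the inequality system $-g(f_d(x))<0$, $g(x)\leq0$ has no solution; here I use that $g(f_d(x))>0$ is equivalent to $-g(f_d(x))<0$ and that the containment fails exactly when some $x$ violates it.

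The second step is to apply the nonlinear Farkas lemma (Theorem \ref{nlf}) with $h(y)=-g(f_d(y))$ and the single constraint $g_1(y)=g(y)$. For this I must check the hypotheses: $h$ is convex because $g(f_d(x))$ is assumed concave, so $-g(f_d(x))$ is convex; $g_1=g$ is convex by assumption on $\mathcal{S}$; and the Slater condition holds because of the assumed existence of $x^0$ with $g(x^0)<0$ (the constraint $g$ may be nonlinear, and strict feasibility is exactly what is provided). The lemma then yields $\alpha\geq0$ with $-g(f_d(x))+\alpha g(x)\geq0$ for all $x\in\mathbb{R}^n$, which is condition (\ref{zheq1211}); conversely, if such an $\alpha\geq0$ exists then for any $x$ with $g(x)\leq0$ we get $g(f_d(x))\leq\alpha g(x)\leq0$, so $x\in\mathcal{S}'$, giving $\mathcal{S}\subseteq\mathcal{S}'$ and hence invariance. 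This closes the equivalence in the main part of the statement.

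For the final sentence, when $g(x)$ and $g(f_d(x))$ are quadratic, I would rerun the same argument but invoke the $S$-lemma (Theorem \ref{slemma}) in place of the nonlinear Farkas lemma: take $h(y)=-g(f_d(y))$ and $g(y)$ itself as the two quadratics. The $S$-lemma requires only strict feasibility of the constraint quadratic — supplied again by $x^0$ with $g(x^0)<0$ — and, crucially, imposes no convexity/concavity assumption whatsoever, which is exactly why the concavity hypothesis on $g(f_d(x))$ can be dropped in the quadratic case. The main obstacle I anticipate is purely in the bookkeeping of hypotheses: one must be careful that the Slater point $x^0$ (the statement writes $g(x)<0$, evidently meaning $g(x^0)<0$) genuinely matches the strict-inequality form required by Theorem \ref{nlf} when $g$ is nonlinear, and that the concavity of $g\circ f_d$ is used only to supply convexity of $h=-g\circ f_d$ so the first bullet of Theorem \ref{nlf} applies — nothing deeper is needed, and no regularity of $f_d$ beyond what makes these compositions well defined enters the argument.
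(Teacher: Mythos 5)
Your proposal is correct and follows essentially the same route as the paper: reformulate invariance as $\mathcal{S}\subseteq\mathcal{S}'=\{x\,|\,g(f_d(x))\leq0\}$, express this as the inconsistency of the system $-g(f_d(x))<0$, $g(x)\leq0$, and apply the nonlinear Farkas lemma (respectively the $S$-lemma in the quadratic case). Your additional care in verifying the Slater condition and the convexity of $h=-g\circ f_d$, and in spelling out the easy converse direction, only makes explicit what the paper's terser proof leaves implicit.
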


\begin{proof}
The major tool used in this proof is the convex Farkas Lemma, i.e., Theorem \ref{nlf}.
Note that to ensure $\mathcal{S}$ is an invariant set for the discrete system, we need to prove
$
\mathcal{S}\subseteq\mathcal{S}'=\{x\,|\,g(f_d(x))\leq0\},
$
i.e., $(\mathbb{R}^n\setminus \mathcal{S}')\cap \mathcal{S}=\emptyset.$ Then the following inequality system has no solution:
$$
-g(f_d(x))<0, ~~~g(x)\leq 0.
$$
According to Theorem \ref{nlf}, there exists an $\alpha\geq0$, such that
$$
-g(f_d(x))+\alpha g(x)\geq0, ~~~\text{ for } x\in \mathbb{R}^n,
$$
which is the same as (\ref{zheq1211}).
For the case of quadratic functions, we can use a similar argument and the
\emph{S}-Lemma to prove the last statement.
\end{proof}

\begin{remark}
The set $\mathcal{S}$ given as in (\ref{1zhcov1}) is represented by only a single convex function. In fact, the first statement in Theorem \ref{zhthm11} can be easily extended to the set which is presented by several convex functions, e.g., polyhedral sets.
\end{remark}

 The first statement in Theorem \ref{zhthm11} requires  that $g(f_d(x))$ is a concave function given that $g(x)$ is a convex function. Let us consider $x$ defined in a one dimensional space as an example\footnote{The example uses the following theorem: If $\tilde{g}(x)$ is a nondecreasing function, and $\tilde{f}(x)$ is a convex function, then $\tilde{g}(\tilde{f}(x))$ is a convex function.} to illustrate this case is indeed possbile.  Since $f_d(x)$ is a convex function, we have $f''_d(x)\geq0$ for all $x\in \mathbb{R}.$ For simplicity, we denote $h(x)=-g(f_d(x)).$ Then we have 
 \begin{equation} 
 h''(x)=-g''(f_d(x))(f_d(x))^2-g'(f_d(x))f_d''(x).
 \end{equation} 
 If $h''(x)>0$ for all $x\in \mathbb{R}$, then $h(x)$ is a convex function, i.e., $g(f_d(x))$ is a concave function. We now find a sufficient condition such that $h'(x)>0$  for all $x\in \mathbb{R}$.  Assume that $g(x)$ is a decreasing convex nonlinear function and $g(x)$ has no lower bound, we have  $g'(x)<0$ and $g''(x)>0$ for all $x\in \mathbb{R}.$ Assume $f_d(x)$ is a concave function, we have $f_d''(x)<0.$ This yields $-\tfrac{g'(f_d(x))}{g_d''(f(x))}>0\geq\tfrac{(f'(x))^2}{f''(x)}$, i.e., $h''(x)>0.$

\begin{remark}
Consider the following optimization problem:
\begin{equation}\label{svop311}
\max_{\alpha\geq0}\min_{x\in\mathbb{R}^n}\{\alpha g(x)-g(f_d(x))\}.
\end{equation}
If the optimal objective value of  optimization problem (\ref{svop311}) is nonnegative, we can claim that condition (\ref{zheq1211}) holds.
\end{remark}

Thus far we have three ``max-min" optimization problems shown as in  (\ref{sdopt351}), (\ref{slvn311}),  and (\ref{svop311}).  It is usually not easy to solve a ``max-min" problem.  In fact, these three problems can be transformed into a nonlinear optimization problem. Here we consider  (\ref{svop311}) as an example to illustrate this idea.  From here, we assume that $g(x)$ in (\ref{1zhcov1}) is continuously differentiable. 

\begin{theorem}\label{ch5:minmax}
Optimization problem (\ref{svop311}) is equivalent to the nonlinear optimization problem
\begin{equation}\label{optd3431}
\max_{x,\alpha}\{\alpha g(x)-g(f_d(x))\,|\,\alpha\nabla_xg(x)-\nabla_xg(f_d(x))=0,\alpha\geq0\}.
\end{equation}
\end{theorem}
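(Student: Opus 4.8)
The plan is to show that the inner minimization in the ``max-min'' problem (\ref{svop311}) is, for each fixed $\alpha \geq 0$, an unconstrained convex (or at least smooth) minimization whose optimal solutions are exactly the stationary points, and then to fold the stationarity condition into the feasible set of a single joint optimization over $(x,\alpha)$. First I would fix $\alpha \geq 0$ and consider $\varphi_\alpha(x) := \alpha g(x) - g(f_d(x))$. By the standing assumptions of Theorem \ref{zhthm11}, $g$ is convex and $g(f_d(x))$ is concave, so $-g(f_d(x))$ is convex and hence $\varphi_\alpha$ is convex in $x$ for every $\alpha \geq 0$; moreover $g$ is assumed continuously differentiable and $f_d$ is continuously differentiable, so $\varphi_\alpha$ is $C^1$. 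Therefore $x$ minimizes $\varphi_\alpha$ over $\mathbb{R}^n$ if and only if $\nabla_x \varphi_\alpha(x) = \alpha\nabla_x g(x) - \nabla_x g(f_d(x)) = 0$, i.e.\ exactly the constraint appearing in (\ref{optd3431}).

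Next I would argue the two optimization problems have the same optimal value by a two-way inequality. For ``$\leq$'': take any $\alpha^\ast \geq 0$ attaining (or approaching) the outer supremum in (\ref{svop311}); the inner minimum over $x$ is attained at some $x^\ast$ (or along a minimizing sequence) with $\nabla_x \varphi_{\alpha^\ast}(x^\ast) = 0$, so $(x^\ast,\alpha^\ast)$ is feasible for (\ref{optd3431}) and its objective value equals $\min_x \varphi_{\alpha^\ast}(x)$, which is the optimal value of (\ref{svop311}); hence the optimal value of (\ref{optd3431}) is at least that of (\ref{svop311}). For ``$\geq$'': for any $(x,\alpha)$ feasible for (\ref{optd3431}), the constraint $\nabla_x\varphi_\alpha(x)=0$ together with convexity of $\varphi_\alpha$ forces $\varphi_\alpha(x) = \min_{y}\varphi_\alpha(y) \leq \max_{\beta\geq 0}\min_y \varphi_\beta(y)$, so every feasible objective value of (\ref{optd3431}) is bounded above by the optimal value of (\ref{svop311}). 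Combining the two inequalities gives equality of the optimal values, and the correspondence $(x^\ast,\alpha^\ast)$ above shows the optimizers correspond as well.

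The main obstacle is the attainment/existence issue: the inner infimum $\min_{x\in\mathbb{R}^n}\varphi_\alpha(x)$ need not be attained (a convex $C^1$ function on $\mathbb{R}^n$ may have no stationary point, e.g.\ if it is unbounded below or approaches its infimum only at infinity), and likewise the outer supremum over $\alpha\geq 0$ may not be attained. Strictly speaking, in such degenerate cases the feasible set of (\ref{optd3431}) could be empty while (\ref{svop311}) still has a finite value, so the claimed equivalence should be read in the sense intended by the paper — namely, when condition (\ref{zheq1211}) is to be certified, one is in the regime where a minimizer (hence a stationary point) exists, e.g.\ because $\varphi_\alpha$ is coercive or because the relevant $\alpha$ makes $\varphi_\alpha$ bounded below with its infimum attained. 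I would state this caveat explicitly, note that under the hypotheses guaranteeing the certificate (a nonnegative optimal value with the inner problem well-posed) the stationarity reformulation is exact, and otherwise invoke the convexity of $\varphi_\alpha$ so that any stationary point is automatically a global minimizer, which is the only implication actually needed to make (\ref{optd3431}) a faithful substitute for (\ref{svop311}) in the verification of (\ref{zheq1211}).
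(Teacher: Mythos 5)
Your proposal is correct and follows essentially the same route as the paper: the paper notes that $\alpha g(x)-g(f_d(x))$ is convex for $\alpha\geq 0$ and then cites the Wolfe dual of the unconstrained inner problem, which is exactly your direct argument that for a convex $C^1$ function stationarity characterizes global minimizers, folded into a joint maximization over $(x,\alpha)$. Your caveat about non-attainment of the inner infimum --- in which case (\ref{optd3431}) can be infeasible while (\ref{svop311}) still has a finite value --- is a genuine gap that the paper's one-line appeal to Wolfe duality silently shares, so it is worth stating explicitly as you do.
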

\begin{proof}
Since $\alpha\geq0$, and the functions $g(x)$ and $-g(f_d(x))$ are both convex functions, we have that $\alpha g(x)-g(f_d(x))$ is also a convex function. Also, for $\alpha\geq0,$ the optimization problem
\begin{equation}\label{optd31}
\min_{x\in\mathbb{R}^n}\{\alpha g(x)-g(f_d(x))\},
\end{equation}
is a convex optimization problem in $\mathbb{R}^n$, thus problem (\ref{optd31}) has a Wolfe dual, see, e.g., \cite{terlakynlp, wolfe} given as follows:
\begin{equation}
\max_{x\in\mathbb{R}^n}\{\alpha g(x)-g(f_d(x))\,|\,\alpha\nabla_xg(x)-\nabla_x(g(f_d(x)))=0\}.
\end{equation}
Consequently, problem (\ref{svop311}) is equivalent to the nonlinear optimization problem  (\ref{optd3431}).
\end{proof}


\begin{remark}
One can use a proof similar to the one presented in Theorem \ref{ch5:minmax} to derive equivalent nonlinear optimization problems for the optimization problems  presented in (\ref{sdopt351}) and (\ref{slvn311}).
\end{remark}


We now consider an alternative way to investigate  invariance conditions for discrete  systems. The following lemma is easy to prove.
\begin{lemma}\label{lemd35}
Let $\phi(x),\psi(x):\mathbb{R}^n\rightarrow\mathbb{R}.$ The following two statements are equivalent:
\begin{itemize}
\item The inequality system $\phi(x)\leq0, \psi(x)>0$ has no solution.
\item The optimal objective value of the optimization problem 
\begin{equation}
\max\{\epsilon\,|\,\phi(x)\leq0, -\psi(x)+\epsilon\leq0\}
\end{equation} 
is nonpositive.
\end{itemize}
\end{lemma}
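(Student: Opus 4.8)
The plan is to prove the two implications in Lemma~\ref{lemd35} directly, since the statement is essentially a reformulation of ``no solution'' via an auxiliary maximization. First I would set up notation: let $\phi,\psi:\mathbb{R}^n\to\mathbb{R}$ be arbitrary, and consider the feasible region $\mathcal{F}=\{x\in\mathbb{R}^n\,|\,\phi(x)\le0\}$. The key observation is that the constraint $-\psi(x)+\epsilon\le0$ in the optimization problem is equivalent to $\epsilon\le\psi(x)$, so a pair $(x,\epsilon)$ is feasible for the problem if and only if $x\in\mathcal{F}$ and $\epsilon\le\psi(x)$. Hence the supremum of $\epsilon$ over the feasible set equals $\sup_{x\in\mathcal{F}}\psi(x)$ (with the convention that this is $-\infty$ when $\mathcal{F}=\emptyset$).

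Next I would argue each direction. For the forward direction, suppose the system $\phi(x)\le0,\ \psi(x)>0$ has no solution; then for every $x\in\mathcal{F}$ we have $\psi(x)\le0$, so $\sup_{x\in\mathcal{F}}\psi(x)\le0$, and therefore the optimal objective value of the maximization problem is nonpositive (including the vacuous case where $\mathcal{F}$ is empty and the value is $-\infty\le0$). For the converse, suppose the optimal objective value is nonpositive; if the original system did have a solution $x^*$, then $x^*\in\mathcal{F}$ and $\psi(x^*)>0$, so the pair $(x^*,\tfrac{1}{2}\psi(x^*))$ would be feasible with objective value $\tfrac{1}{2}\psi(x^*)>0$, contradicting nonpositivity of the optimal value. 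Hence the system has no solution.

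One subtlety worth a sentence in the write-up is the interpretation of ``optimal objective value'' when the feasible set is empty or the objective is unbounded above: I would state explicitly that an empty feasible set is treated as having value $-\infty$ (so ``nonpositive'' holds vacuously), which keeps the equivalence clean. Since no convexity, continuity, or boundedness hypotheses are imposed on $\phi$ and $\psi$, no Theorems of Alternatives are needed here — the lemma is purely a bookkeeping restatement, which matches the paper's remark that it ``is easy to prove.'' The only mild obstacle is being careful with these degenerate/unbounded cases so that the phrase ``nonpositive'' is unambiguous; there is no real analytic difficulty.
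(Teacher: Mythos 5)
Your proof is correct and is exactly the direct verification the paper has in mind (the paper omits the proof entirely, stating only that the lemma ``is easy to prove''). The observation that the optimal value equals $\sup_{x:\phi(x)\le0}\psi(x)$, together with your explicit treatment of the empty-feasible-set convention, cleanly establishes both directions.
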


According to Lemma \ref{lemd35}, we have the following lemma. 
\begin{lemma}\label{lems3}
Let the discrete  system be given as in (\ref{slnlsdis}). Let  $\mathcal{S}_1=\{x\in \mathbb{R}^n\,|\,\phi(x)\leq0\}$ and $\mathcal{S}_2=\{x\in \mathbb{R}^n\,|\,\psi(x)\leq0\}$ be two closed sets\footnote{It is not necessary to assume that that the two sets are convex sets.}, where $\phi(x),\psi(x):\mathbb{R}^n\rightarrow\mathbb{R}.$ Then $x\in \mathcal{S}_1$ implies $f_d(x)\in\mathcal{S}_2$ if and only if  the optimal objective value of the following optimization problem
\begin{equation}\label{eqz0rdd}
\max\{\epsilon\,|\,\phi(x)\leq0, -\psi(f_d(x))+\epsilon\leq0\},
\end{equation}
is nonpositive.
\end{lemma}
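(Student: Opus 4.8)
The plan is to obtain the statement as an immediate corollary of Lemma \ref{lemd35} via a single substitution. First I would restate the condition ``$x\in\mathcal{S}_1$ implies $f_d(x)\in\mathcal{S}_2$'' in the language of that lemma. By the definitions of $\mathcal{S}_1$ and $\mathcal{S}_2$, this condition says precisely that for every $x\in\mathbb{R}^n$ with $\phi(x)\leq0$ one has $\psi(f_d(x))\leq0$; equivalently, the inequality system $\phi(x)\leq0,\ \psi(f_d(x))>0$ has no solution $x\in\mathbb{R}^n$.

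Next I would introduce the composition $\tilde{\psi}:=\psi\circ f_d:\mathbb{R}^n\rightarrow\mathbb{R}$, which is a well-defined real-valued function on $\mathbb{R}^n$ since $f_d:\mathbb{R}^n\rightarrow\mathbb{R}^n$ and $\psi:\mathbb{R}^n\rightarrow\mathbb{R}$. Applying Lemma \ref{lemd35} with $(\phi,\tilde{\psi})$ in place of $(\phi,\psi)$, its first bullet reads ``$\phi(x)\leq0,\ \psi(f_d(x))>0$ has no solution'' and its second bullet reads ``the optimal objective value of $\max\{\epsilon\,|\,\phi(x)\leq0,\ -\psi(f_d(x))+\epsilon\leq0\}$ is nonpositive'', i.e., the optimal value of problem (\ref{eqz0rdd}) is nonpositive. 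Since Lemma \ref{lemd35} asserts the equivalence of its two bullets, chaining this with the reformulation from the first step would complete the proof.

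There is essentially no obstacle here; the only point worth emphasizing is the one flagged in the footnote, namely that neither convexity nor closedness of $\mathcal{S}_1$ and $\mathcal{S}_2$ plays any role — the argument behind Lemma \ref{lemd35} uses only that $\phi$ and $\tilde{\psi}$ are $\mathbb{R}$-valued functions on $\mathbb{R}^n$. If one additionally wanted the maximum in (\ref{eqz0rdd}) to be attained rather than merely a supremum, it would suffice to assume $f_d$ and $\psi$ continuous, but this is not needed for the stated equivalence of optimal values.
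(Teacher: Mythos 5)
Your proof is correct and follows essentially the same route as the paper: both reduce the implication to the statement that the system $\phi(x)\leq0$, $\psi(f_d(x))>0$ has no solution and then invoke Lemma \ref{lemd35} with $\psi\circ f_d$ in the role of $\psi$. Your explicit remark that closedness and convexity are never used, and the note on attainment of the maximum, are accurate but not needed for the equivalence.
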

\begin{proof}
We have that $x\in \mathcal{S}_1$ implies $f_d(x)\in \mathcal{S}_2$ if and only if $\mathcal{S}_1\subseteq\tilde{\mathcal{S}_2}=\{x\,|\,\psi(f_d(x))\leq0\}.$ This is equivalent to $(\mathbb{R}^n\setminus \tilde{\mathcal{S}}_2)\cap \mathcal{S}_1=\emptyset,$ i.e., the systems $\phi(x)\leq0$ and $\psi(f_d(x))>0$ have no solution. Then, according to Lemma \ref{lemd35}, the lemma is immediate.
\end{proof}
According to Lemma \ref{lems3}, we have the following theorem. 
\begin{theorem}
Let the discrete  system be given as in (\ref{slnlsdis}), and let  $\bar{\mathcal{S}}=\{x\in \mathbb{R}^n\,|\, \phi(x)\leq 0\}$ be a set, where $\phi(x):\mathbb{R}^n\rightarrow \mathbb{R}$. Then $\bar{\mathcal{S}}$ is an invariant set for the discrete system if and only if the optimal objective value of the following optimization problem 
\begin{equation}
\max\{\epsilon\,|\, \phi(x)\leq 0, -\phi(f_d(x))+\epsilon \leq 0\}
\end{equation}
is nonpositive. 

\end{theorem}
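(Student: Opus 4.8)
The plan is to obtain the result as a direct corollary of Lemma~\ref{lems3}, specialized to the case $\mathcal{S}_1=\mathcal{S}_2=\bar{\mathcal{S}}$, i.e., $\psi=\phi$. The only genuine work is to reconcile the multi-step definition of invariance (Definition~\ref{def1}) with the single-step set inclusion that Lemma~\ref{lems3} addresses.

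First I would record the reduction. By Definition~\ref{def1}, $\bar{\mathcal{S}}$ is invariant for (\ref{slnlsdis}) if and only if $x_k\in\bar{\mathcal{S}}$ implies $x_{k+1}=f_d(x_k)\in\bar{\mathcal{S}}$ for every $k\in\mathbb{N}$. Since (\ref{slnlsdis}) is autonomous, this is equivalent to the single assertion that $x\in\bar{\mathcal{S}}$ implies $f_d(x)\in\bar{\mathcal{S}}$. Indeed, the implication for $k=0$ (with $x_0$ arbitrary in $\bar{\mathcal{S}}$) is exactly this assertion, which gives one direction; conversely, if $x\in\bar{\mathcal{S}}\Rightarrow f_d(x)\in\bar{\mathcal{S}}$, then a trivial induction on $k$, applying the implication to $x=x_k$ at each step, shows that any trajectory starting in $\bar{\mathcal{S}}$ remains in $\bar{\mathcal{S}}$.

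Next I would invoke Lemma~\ref{lems3} with $\psi(x)=\phi(x)$, so that $\mathcal{S}_1=\mathcal{S}_2=\bar{\mathcal{S}}$: the statement that $x\in\bar{\mathcal{S}}$ implies $f_d(x)\in\bar{\mathcal{S}}$ holds if and only if the optimal objective value of
\begin{equation*}
\max\{\epsilon\,|\,\phi(x)\leq0,\ -\phi(f_d(x))+\epsilon\leq0\}
\end{equation*}
is nonpositive. Combining this equivalence with the reduction from the previous paragraph yields the theorem. One may equally appeal directly to Lemma~\ref{lemd35} with its $\phi$ taken as $\phi$ and its $\psi$ taken as $\phi\circ f_d$; this avoids even the closedness hypothesis that appears in Lemma~\ref{lems3} and makes clear that no regularity assumption on $\phi$ is needed.

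I do not expect a serious obstacle here; the proof is essentially bookkeeping. The one point that needs a moment's care is the direction of the inequality hidden in the word ``nonpositive'': the auxiliary problem admits a feasible point with $\epsilon>0$ precisely when there is an $x$ with $\phi(x)\le 0$ and $\phi(f_d(x))>0$, i.e., a point of $\bar{\mathcal{S}}$ that escapes $\bar{\mathcal{S}}$ in one step, so nonpositivity of the optimum is exactly the negation of such an escape — which is what Lemma~\ref{lemd35} already encodes. It is also worth stating explicitly that $\bar{\mathcal{S}}$ need be neither convex nor closed for the argument to go through, since neither Lemma~\ref{lemd35} nor the induction uses any such property.
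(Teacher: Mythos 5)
Your proposal is correct and follows essentially the same route as the paper: the paper likewise reduces invariance to the inclusion $\bar{\mathcal{S}}\subseteq\{x\,|\,\phi(f_d(x))\leq0\}$ and then cites Lemma~\ref{lems3} with $\psi=\phi$. Your added remarks (the induction justifying the single-step reduction, and the observation that one could appeal directly to Lemma~\ref{lemd35} without the closedness hypothesis) are sound but only make explicit what the paper leaves implicit.
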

\begin{proof}
The set is an invariant set for the discrete system if and only if $\bar{\mathcal{S}}\subseteq\tilde{\mathcal{S}}=\{x\,|\,\phi(f_d(x))\leq 0\}.$ According to Lemma \ref{lems3}, the theorem is immediate. 
\end{proof}

\section{Invariance Conditions for Continuous  Systems}
In this section, we consider invariance conditions  for continuous  systems in the form of (\ref{slnlscon}). For  discrete systems, in Section \ref{dis}, we  transformed the invariance conditions into ``max-min" optimization problems, which were later proved to be equivalent to traditional nonlinear optimization problems. For the continuous systems, we transform the invariance conditions into nonlinear optimization problems, too.

First, we consider an invariance condition for  continuous system (\ref{slnlscon}) and for polyhedral sets $\mathcal{P}=\{x\,\,Gx\leq b\}$, where $G\in \mathbb{R}^{m\times n}$ and $b\in \mathbb{R}^m$. For simplicity we assume that the origin is in the interior of the polyhedral set, thus we have $\mathcal{P}=\{x\in\mathbb{R}^n\,|\,Gx\leq b\}=\{x\in\mathbb{R}^n\,|\,g_i^Tx\leq b_i, i\in\mathcal{I}(m)\}$, where $b>0$.

\begin{theorem}\label{labdd}
Let a polyhedral set be given as $\mathcal{P}=\{x\in\mathbb{R}^n\,|\,g_i^Tx\leq b_i, i\in\mathcal{I}(m)\}$, where $b>0,$ and let $\mathcal{P}^i=\{ x\in\mathcal{P}\,|\,g_i^Tx=b_i \}$ for $i\in\mathcal{I}(m).$ Then $\mathcal{P}$ is an invariant set for the continuous system (\ref{slnlscon}) if and only if for all $i\in\mathcal{I}(m)$
\begin{equation}\label{dagagd}
g_i^Tf_c(x)\leq0 \text{ holds for all } x\in\mathcal{P}^i.
\end{equation}
\end{theorem}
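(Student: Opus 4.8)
The plan is to obtain the stated condition directly from Nagumo's Theorem (Theorem \ref{nagumo}) after computing the tangent cone of $\mathcal{P}$ at each of its boundary points. Since $f_c$ is continuously differentiable, the initial value problem for (\ref{slnlscon}) has a unique solution through every point of $\mathcal{P}$, so the hypothesis of Theorem \ref{nagumo} is satisfied, and $\mathcal{P}$ is an invariant set for (\ref{slnlscon}) if and only if $f_c(x)\in\mathcal{T}_{\mathcal{P}}(x)$ for every $x\in\partial\mathcal{P}$. Because $b>0$, the origin lies in $\mathrm{int}(\mathcal{P})$, so $\mathcal{P}$ has nonempty interior; hence $\mathrm{int}(\mathcal{P})=\{x\,|\,g_i^Tx<b_i,\ i\in\mathcal{I}(m)\}$, every boundary point activates at least one constraint, and $\partial\mathcal{P}=\bigcup_{i\in\mathcal{I}(m)}\mathcal{P}^i$.

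The key step is the identification of the tangent cone of $\mathcal{P}$ at a boundary point. For $x\in\partial\mathcal{P}$ with active index set $I(x)=\{i\in\mathcal{I}(m)\,|\,g_i^Tx=b_i\}$, I would show
\begin{equation}\label{tancone-poly}
\mathcal{T}_{\mathcal{P}}(x)=\{y\in\mathbb{R}^n\,|\,g_i^Ty\leq0\ \text{ for all } i\in I(x)\}.
\end{equation}
The inclusion $\supseteq$ is immediate: if $g_i^Ty\leq0$ for every active $i$, then $x+ty\in\mathcal{P}$ for all sufficiently small $t>0$ (the active constraints remain satisfied for every $t\geq0$, the inactive ones by continuity), so $\mathrm{dist}(x+ty,\mathcal{P})=0$ and $y\in\mathcal{T}_{\mathcal{P}}(x)$. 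For $\subseteq$, given $y\in\mathcal{T}_{\mathcal{P}}(x)$ one picks $t_k\downarrow0$ and $s_k\in\mathcal{P}$ with $\|x+t_ky-s_k\|=o(t_k)$, and for each active $i$ uses $g_i^Ts_k\leq b_i=g_i^Tx$ to bound $t_kg_i^Ty\leq\|g_i\|\,o(t_k)$; dividing by $t_k$ and letting $k\to\infty$ gives $g_i^Ty\leq0$. (Alternatively, (\ref{tancone-poly}) is the standard polyhedral tangent-cone formula and can simply be quoted, e.g.\ from \cite{Blanchini}.)

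It remains to combine these facts. By (\ref{tancone-poly}), $f_c(x)\in\mathcal{T}_{\mathcal{P}}(x)$ is equivalent to $g_i^Tf_c(x)\leq0$ for every $i\in I(x)$, so ``$f_c(x)\in\mathcal{T}_{\mathcal{P}}(x)$ for all $x\in\partial\mathcal{P}$'' becomes ``$g_i^Tf_c(x)\leq0$ for every $x\in\partial\mathcal{P}$ and every $i\in I(x)$''. Interchanging the two quantifiers and using $\{x\in\partial\mathcal{P}\,|\,i\in I(x)\}=\{x\in\mathcal{P}\,|\,g_i^Tx=b_i\}=\mathcal{P}^i$, this reads: for all $i\in\mathcal{I}(m)$, $g_i^Tf_c(x)\leq0$ for all $x\in\mathcal{P}^i$, which is exactly (\ref{dagagd}); together with the Nagumo equivalence from the first paragraph this finishes the proof. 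The only genuinely technical point is the tangent-cone identity (\ref{tancone-poly}); once it is available the rest is a routine reindexing of quantifiers, and the hypothesis $b>0$ enters only to ensure $\mathrm{int}(\mathcal{P})\neq\emptyset$ so that $\partial\mathcal{P}$ splits into the faces $\mathcal{P}^i$.
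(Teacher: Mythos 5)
Your proposal is correct and follows essentially the same route as the paper: identify the tangent cone of $\mathcal{P}$ at a boundary point via the active constraint set and then invoke Nagumo's Theorem. You simply supply more detail than the paper does (a proof of the tangent-cone identity, the verification of Nagumo's hypothesis, and the explicit quantifier interchange), all of which is sound.
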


\begin{proof}
Let $x\in\partial\mathcal{P}$. Then we have that $x$ is in the relative interior of a face, on the relative boundary, or a vertex of $\mathcal{P}$. There exists a maximal index set $\mathcal{I}_x$ such that $x\in\cap_{i\in \mathcal{I}_x}\mathcal{P}^i.$
We note that $\mathcal{T_P}(x)=\{y\in\mathbb{R}^n\,|\,g_i^Ty\leq0, i\in \mathcal{I}_x\}$, then, according to Nagumo Theorem \ref{nagumo}, the theorem is immediate. 
\end{proof}

\begin{remark}
Let us assume  a polyhedral set $\mathcal{P}$ be given as in the statement of Theorem \ref{labdd}. Consider the following $m$ optimization problems:
\begin{equation}\label{dagad}
\max\{g_i^Tf_c(x)\,|\, g_i^Tx=b_i \text{ and } x\in \mathcal{P}\}, i\in \mathcal{I}(m).
\end{equation}
If the optimal objective values of  all the $m$ optimization problems in (\ref{dagad}) are nonpositive, then we can claim that (\ref{dagagd}) holds. 
\end{remark}

\textbf{Clearly, when $g_i^Tf_c(x)$ is a concave function, problem (\ref{dagad}) is a convex problem, which is very easy to solve by using optimization solver. Otherwise, this problem is a nonconvex problem, which may need special nonlinear algorithms to solve. }

Invariance conditions for  continuous system (\ref{slnlscon}) and  for ellipsoids or Lorenz cones  is presented  in the following theorem.
\begin{theorem}\label{ch1:lemma4}
 Let  the ellipsoid $\mathcal{E}=\{x\,|\,x^TQx\leq 1\}$, where $Q\in \mathbb{R}^{n\times n}$ and $Q\succ0$, and  the continuous system be given as in   (\ref{slnlscon}). Then  $\mathcal{E}$ 
is an invariant set for the continuous  system (\ref{slnlscon}) if and
only if
\begin{equation}\label{no1}
(f_c(x))^TQx\leq 0, \text{ for all } x\in \partial \mathcal{E}.
\end{equation}
\end{theorem}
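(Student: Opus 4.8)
The plan is to apply Nagumo's Theorem (Theorem~\ref{nagumo}) directly; the only genuine computation is identifying the tangent cone $\mathcal{T}_\mathcal{E}(x)$ at a boundary point. First I would record the standing facts: $\mathcal{E}=\{x\mid x^TQx\le1\}$ is closed, convex, and bounded (boundedness because $Q\succ0$), and since $f_c$ is continuously differentiable it is locally Lipschitz, so the unique-solvability hypothesis of Theorem~\ref{nagumo} is met. Hence $\mathcal{E}$ is an invariant set for~(\ref{slnlscon}) if and only if $f_c(x)\in\mathcal{T}_\mathcal{E}(x)$ for every $x\in\partial\mathcal{E}=\{x\mid x^TQx=1\}$.

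Next I would compute $\mathcal{T}_\mathcal{E}(x)$ for a fixed $x\in\partial\mathcal{E}$. For any direction $y$, symmetry of $Q$ gives $(x+ty)^TQ(x+ty)=1+2t\,y^TQx+t^2\,y^TQy$. If $y^TQx<0$, then $x+ty\in\mathcal{E}$ for all sufficiently small $t>0$, so $\text{dist}(x+ty,\mathcal{E})=0$ and $y\in\mathcal{T}_\mathcal{E}(x)$. If $y^TQx=0$, then $x+ty$ lies just outside $\mathcal{E}$ with $\|x+ty\|_Q^2=1+t^2\,y^TQy$, and rescaling by $(1+t^2\,y^TQy)^{-1/2}$ projects it back onto $\partial\mathcal{E}$ at cost $O(t^2)$, so $\text{dist}(x+ty,\mathcal{E})/t\to0$ and again $y\in\mathcal{T}_\mathcal{E}(x)$. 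Finally, if $y^TQx>0$, the smooth constraint $\phi(x)=x^TQx-1$ has $\phi(x+ty)>0$ and $\nabla\phi(x)=2Qx\neq0$, so $\text{dist}(x+ty,\mathcal{E})\ge ct$ for some $c>0$ and all small $t$, whence $y\notin\mathcal{T}_\mathcal{E}(x)$. Therefore $\mathcal{T}_\mathcal{E}(x)=\{y\mid y^TQx\le0\}$. (Alternatively one may simply cite that for a convex set cut out by a single smooth convex inequality whose gradient is nonzero on the boundary, the tangent cone equals the linearized cone.)

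Plugging this into Nagumo's condition, $f_c(x)\in\mathcal{T}_\mathcal{E}(x)$ becomes $(f_c(x))^TQx\le0$, which is precisely~(\ref{no1}); quantifying over $x\in\partial\mathcal{E}$ settles both implications at once. I expect the main obstacle to be the tangent-cone computation, specifically handling the borderline case $y^TQx=0$ carefully so that one obtains the non-strict inequality $\le$ rather than $<$; a secondary, more pedantic point is verifying that the solution-existence/uniqueness hypothesis of Nagumo's Theorem truly holds here, but local uniqueness from $f_c\in C^1$ suffices, since once invariance is established solutions remain in the compact set $\mathcal{E}$ and hence exist globally.
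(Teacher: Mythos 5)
Your proposal is correct and follows essentially the same route as the paper: identify $\partial\mathcal{E}$, show the tangent cone at $x\in\partial\mathcal{E}$ is $\{y\mid y^TQx\le0\}$, and invoke Nagumo's Theorem. You simply supply more detail than the paper does (the explicit case analysis for the tangent cone and the verification of Nagumo's hypotheses), and you avoid the paper's typo identifying the outer normal as $f_d(x)$ rather than $Qx$.
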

\begin{proof}
Note that $\partial
\mathcal{E}=\{x\,|\,x^TQx=1\}$, thus the outer normal vector of $\mathcal{E}$ at $x\in \partial \mathcal{E}$ is $f_d(x)$. Then we have that the tangent cone at $x\in \partial\mathcal{E}$ is given as $\mathcal{T}_{\mathcal{E}}(x)=\{y\,|\, y^TQx\leq0\}$, thus this
theorem follows by the Nagumo Theorem \ref{nagumo}.
\end{proof}
Note that  Theorem \ref{ch1:lemma4} can be applied to  a Lorenz cone $\mathcal{C_L}$, see, e.g., \cite{song1}. 
\begin{remark}
Let us consider an ellipsoid $\mathcal{E}$ and  the following optimization problem:
\begin{equation}\label{slvn3}
\max \{ (f_c(x))^TQx\,|\, x^TQx=1\}.
\end{equation}
If the global optimal objective value of optimization problem (\ref{slvn3}) is nonpositive, then condition (\ref{no1}) holds.
\end{remark}

\textbf{We note that problem (\ref{slvn3}) is not a convex optimization problem since the feasible region  $\{x\,|\,x^TQx=1\}$ is not a convex region. Thus, nonconvex optimization algorithms are required to solve this problem. }

\begin{theorem}
 Let the convex set $\mathcal{S}$ be given as in (\ref{1zhcov1}) and let function $g(x)$ be continuously differentiable. Then $\mathcal{S}$
is an invariant set for the continuous system (\ref{slnlscon}) if and
only if
\begin{equation}\label{zheq25}
(\nabla g(x))^Tf_c(x)\leq0, ~~~~\text{ for all } x\in \partial \mathcal{S}.
\end{equation}
\end{theorem}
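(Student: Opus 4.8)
The plan is to obtain the statement as a direct consequence of the Nagumo Theorem (Theorem~\ref{nagumo}); the only substantive work is identifying the tangent cone $\mathcal{T}_\mathcal{S}(x)$ at boundary points of $\mathcal{S}$. First I would check Nagumo's hypotheses: $\mathcal{S}=\{x\mid g(x)\le 0\}$ is closed because $g$ is continuous and convex because $g$ is convex, and $f_c$ is continuously differentiable, hence locally Lipschitz, so (\ref{slnlscon}) has a unique solution for every $x(0)\in\mathcal{S}$. Theorem~\ref{nagumo} then says that $\mathcal{S}$ is invariant if and only if $f_c(x)\in\mathcal{T}_\mathcal{S}(x)$ for all $x\in\partial\mathcal{S}$, so it suffices to show that, for every $x\in\partial\mathcal{S}$,
\[
\mathcal{T}_\mathcal{S}(x)=\{y\in\mathbb{R}^n\mid (\nabla g(x))^Ty\le 0\}.
\]

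To establish this formula I would first note that $g(x)=0$ for every $x\in\partial\mathcal{S}$ (we have $g(x)\le 0$ since $\mathcal{S}$ is closed, and $g(x)<0$ would put $x$ in $\text{int}(\mathcal{S})$ by continuity of $g$). For the inclusion ``$\subseteq$'', take $y\in\mathcal{T}_\mathcal{S}(x)$; by the liminf definition of the tangent cone there exist $t_k\downarrow 0$ and $x_k\in\mathcal{S}$ with $x_k\to x$ and $(x_k-x)/t_k\to y$, and from $0\ge g(x_k)-g(x)=(\nabla g(x))^T(x_k-x)+o(\|x_k-x\|)$, dividing by $t_k$ and letting $k\to\infty$ yields $(\nabla g(x))^Ty\le 0$; this step uses only differentiability of $g$. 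For ``$\supseteq$'', I would first observe that $\nabla g(x)\ne 0$ on $\partial\mathcal{S}$: if $\nabla g(x)=0$ then $x$ is a global minimizer of the convex function $g$, so $g\ge 0$ everywhere, contradicting the existence of an interior point of $\mathcal{S}$ (the Slater-type assumption, as in Theorem~\ref{zhthm11}). Then for any $y$ with $(\nabla g(x))^Ty<0$ we get $g(x+ty)=t(\nabla g(x))^Ty+o(t)<0$ for all small $t>0$, hence $x+ty\in\mathcal{S}$ and $y\in\mathcal{T}_\mathcal{S}(x)$; since such $y$ are dense in the closed half-space $\{y\mid(\nabla g(x))^Ty\le 0\}$ (approximate $y$ by $y-\varepsilon\nabla g(x)$) and the tangent cone is closed, the whole half-space is contained in $\mathcal{T}_\mathcal{S}(x)$.

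Combining the tangent-cone formula with Nagumo's characterization, $f_c(x)\in\mathcal{T}_\mathcal{S}(x)$ for all $x\in\partial\mathcal{S}$ is equivalent to $(\nabla g(x))^Tf_c(x)\le 0$ for all $x\in\partial\mathcal{S}$, which is (\ref{zheq25}). I expect the main obstacle to be the ``$\supseteq$'' half of the tangent-cone computation, specifically ruling out $\nabla g(x)=0$ at a boundary point: this is precisely where convexity of $g$ together with the interior-point assumption enter, and without the latter the statement can fail (for instance $g(x)=(\|x\|^2-1)_+^2$ gives $\mathcal{S}$ equal to the closed unit ball with $\nabla g\equiv 0$ on $\partial\mathcal{S}$, so (\ref{zheq25}) holds vacuously while $\mathcal{S}$ is not invariant under $\dot x=x$).
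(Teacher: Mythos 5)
Your proof follows the same route as the paper's: apply Nagumo's Theorem \ref{nagumo} and identify the tangent cone at each boundary point with the half-space $\{y \mid (\nabla g(x))^Ty\le 0\}$. The difference is that the paper simply asserts this tangent-cone formula, whereas you prove both inclusions, and in doing so you expose a genuine gap in the theorem as stated: the inclusion ``$\supseteq$'' requires $\nabla g(x)\neq 0$ on $\partial\mathcal{S}$, which does not follow from the hypotheses of this theorem (unlike Theorem \ref{zhthm11}, no Slater point is assumed here). Your counterexample $g(x)=(\|x\|^2-1)_+^2$ with $\dot{x}=x$ is valid: $g$ is convex and continuously differentiable, $\mathcal{S}$ is the closed unit ball, $\nabla g\equiv 0$ on the unit sphere so (\ref{zheq25}) holds vacuously, yet the ball is not invariant. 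So your argument is correct once the Slater condition ($g(x^0)<0$ for some $x^0$) is added, and it shows the paper's statement actually needs that hypothesis. One small slip in wording: $g\ge 0$ everywhere contradicts the existence of a Slater point, not the existence of an interior point of $\mathcal{S}$ --- your own counterexample has $\mathrm{int}(\mathcal{S})\neq\emptyset$ but no Slater point --- though your parenthetical reference to the assumption of Theorem \ref{zhthm11} makes clear you intend the former.
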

\begin{proof}
The outer normal vector at  $x\in \partial \mathcal{S}$ is  $\nabla g(x)$. Since $\mathcal{S}$ is a convex set, we have 
\begin{equation}
\mathcal{T_S}(x)=\{y\,|\,(\nabla g(x))^Ty\leq0\}.
\end{equation}
The proof is immediate by applying Nagumo's Theorem \ref{nagumo}.
\end{proof}
\begin{remark}
Consider the following optimization problem:
\begin{equation}\label{optd45d}
\max\{\alpha\,|\,\alpha=(\nabla g(x))^Tf_c(x), ~g(x)=0\}.
\end{equation}
If the optimal objective value of optimization problem (\ref{optd45d}) is nonpositive, then we can claim that condition (\ref{zheq25}) holds.
\end{remark}

\textbf{We note that when problem (\ref{optd45d}) is not a convex optimization, we may need nonconvex optimization algorithm to solve this problem. }

\section{General Results for Discrete Systems}
In Section \ref{dis},  invariance conditions for polyhedral sets, ellipsoids, and convex sets are presented under certain assumptions.  In this section, invariance conditions for these sets for discrete systems are presented without any assumption.  First let us consider polyhedral sets. 

\begin{theorem}\label{gen1}
Let  the polyhedron  $\mathcal{P}=\{x\,|\, Gx\leq b\},$ where $G\in \mathbb{R}^{m\times n}$ and $b\in \mathbb{R}^m,$ and  the discrete system be given as in   (\ref{slnlsdis}).  Then $\mathcal{P}$ is an invariant set for the discrete
system (\ref{slnlsdis}) if and only if there exists a matrix $H\geq0$,
such that
\begin{equation}\label{slede912}
HGx-Gf_d(x)\geq Hb-b,~~ \text{ for all } x\in \mathcal{P}.
\end{equation}
\end{theorem}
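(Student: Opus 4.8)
The plan is to mirror the proof of Theorem \ref{ch5:polythm}, but to restrict all quantifiers to $\mathcal{P}$ rather than $\mathbb{R}^n$, which is exactly what allows the convexity hypothesis on $b_i - G_i^T f_d(x)$ to be dropped. First I would observe that $\mathcal{P}$ is an invariant set for (\ref{slnlsdis}) if and only if $\mathcal{P} \subseteq \mathcal{P}' = \{x \mid G f_d(x) \leq b\}$, i.e. $G_i^T f_d(x) \leq b_i$ for every $i \in \mathcal{I}(m)$ and every $x \in \mathcal{P}$. This is the same reduction used at the start of the proof of Theorem \ref{ch5:polythm}.

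The forward direction is the substantive one. Suppose $\mathcal{P}$ is invariant. Fix $i \in \mathcal{I}(m)$; I want to produce a row vector $H_i \geq 0$ with $b_i - G_i^T f_d(x) + H_i^T(Gx - b) \geq 0$ for all $x \in \mathbb{R}^n$ — note that once $Gx - b$ can have positive entries (i.e. $x \notin \mathcal{P}$) this is a genuinely different, and in fact easier, demand than in Theorem \ref{ch5:polythm}, because for $x \notin \mathcal{P}$ we may exploit large positive coordinates of $Gx-b$. The key claim is: the linear inequality system $Gx \leq b$, $G_i^T f_d(x) > b_i$ has no solution (this is precisely invariance, restricted to coordinate $i$), and this combinatorial fact about $\mathcal{P}$, together with the polyhedral (linear-in-$x$ through the $Gx\le b$ part) structure, yields the multipliers. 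Concretely, for a given $x^* \in \mathbb{R}^n$ we must choose $H_i \geq 0$ making the displayed inequality hold at $x^*$; if $x^* \in \mathcal{P}$ then invariance gives $b_i - G_i^T f_d(x^*) \geq 0$ while $H_i^T(Gx^* - b) \geq 0$ is automatic from $H_i \geq 0$... wait, that has the wrong sign, since $Gx^* - b \leq 0$. So the argument cannot be purely pointwise; instead I would argue as follows. For $x \in \mathcal{P}$, (\ref{slede912}) reads $H_i^T(Gx-b) \geq b_i - G_i^T f_d(x)$ with left side $\le 0$ and, by invariance, right side $\le 0$, so this is a constraint only where $b_i - G_i^T f_d(x)$ is not too negative; since we only need (\ref{slede912}) for $x\in\mathcal{P}$ (not all of $\mathbb{R}^n$), the honest statement to prove is: invariance $\Longleftrightarrow$ existence of $H \geq 0$ with $b - Gf_d(x) + H(Gx - b) \geq 0$ for all $x \in \mathcal{P}$. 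For the ``$\Leftarrow$'' direction this is trivial: if $x \in \mathcal{P}$ then $Gx - b \leq 0$ so $H(Gx-b) \leq 0$, whence $b - Gf_d(x) \geq -H(Gx-b) \geq 0$, i.e. $x_{k+1} \in \mathcal{P}$. For ``$\Rightarrow$'', the simplest choice is $H = 0$: if $\mathcal{P}$ is invariant then $b - Gf_d(x) \geq 0$ for all $x \in \mathcal{P}$ already, so (\ref{slede912}) holds with $H = 0 \geq 0$.

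I expect the main obstacle to be purely expository rather than mathematical: one must state the theorem's conclusion carefully (the quantifier ``for all $x \in \mathcal{P}$'' is doing all the work, and $H = 0$ suffices), and explain why this ``without any assumption'' version, while trivially true, is still the right object — namely it is the restriction-to-$\mathcal{P}$ analogue of Theorem \ref{ch5:polythm}, and the interest lies in the accompanying optimization reformulation (maximize/minimize the slack over $x \in \mathcal{P}$, a bounded-feasible-region problem) rather than in the existence of $H$ itself. I would therefore keep the proof short: establish the ``$\Leftarrow$'' implication via $H(Gx-b)\le 0$ on $\mathcal{P}$, and the ``$\Rightarrow$'' implication by exhibiting $H=0$, then remark that nontrivial $H$ may be preferable for the associated optimization problem but is not needed for the equivalence.
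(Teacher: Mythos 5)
Your final argument is correct and is essentially identical to the paper's own proof: the sufficiency direction follows from $b - Gf_d(x) \geq H(b - Gx) \geq 0$ for $x \in \mathcal{P}$ since $H \geq 0$, and the necessity direction is witnessed by the trivial choice $H = 0$. The exploratory detour through the Farkas-lemma route of Theorem \ref{ch5:polythm} is harmless since you correctly abandon it, and your closing observation that the content of this assumption-free version lies in the quantifier restriction to $\mathcal{P}$ (and in the associated optimization reformulation) rather than in the existence of a nontrivial $H$ is an accurate reading of the result.
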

\begin{proof}
\emph{Sufficiency:} Condition (\ref{slede912}) can be reformulated as $b-Gf_d(x)\geq H(b-Gx)$, where $x\in\mathcal{P},$ i.e., $b-Gx\geq0$. Since $H\geq0$, we have $b-Gf_d(x)\geq0$, i.e., $f_d(x)\in \mathcal{P}$ for all $x\in \mathcal{P}.$ Thus $\mathcal{P}$ is an invariant set for the discrete system.
\emph{Necessity:} Assume $\mathcal{P}$ is an invariant set for the discrete system. Then for any $x_k\in \mathcal{P}$, we have $x_{k+1}=f_d(x_k)\in \mathcal{P},$  i.e., we have that $b-Gx\geq0$ implies $b-Gf_d(x)\geq0$. Thus, we can choose $H=0.$
\end{proof}

Note that the difference between conditions (\ref{slede911})  and (\ref{slede912}) is that the same inequality holds, for $x\in\mathbb{R}^n$ in (\ref{slede911}), and for $x\in \mathcal{P}$ in (\ref{slede912}), respectively. Similarly, we also have the following remark. 

\begin{remark}\label{remgen1}
Consider the following  $m$ optimization problems
\begin{equation}\label{sdopt355}
\max_{H_i\geq0}\min_{x}\{H_i^TGx-G_i^Tf_d(x)-H_i^Tb+b_i\,|\,Gx\leq b\}~~~ i\in\mathcal{I}(m).
\end{equation}
If the global optimal objective values of the $m$ optimization problems in (\ref{sdopt355}) are all  nonnegative, then condition (\ref{slede912}) holds.

\end{remark} 

We now present an invariance condition for ellipsoids for discrete systems. In this invariance condition, for ellipsoids no assumption is needed. 

\begin{theorem}\label{ellipthm12}
Let  the ellipsoid $\mathcal{E}=\{x\,|\,x^TQx\leq 1\}$, where $Q\in \mathbb{R}^{n\times n}$ and $Q\succ0$, and let the discrete system be given as in   (\ref{slnlsdis}).  Then $\mathcal{E}$ is an invariant
set for the discrete  system  if and only if there exists a $\beta\geq0$, such that
\begin{equation}\label{ellip312}
\beta x^TQx-(f_d(x))^TQf_d(x)\geq \beta-1, ~~~\text{ for all } x\in \mathcal{E}.
\end{equation}
\end{theorem}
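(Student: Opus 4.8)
The plan is to follow the same two-part argument used for the polyhedral case in Theorem~\ref{gen1}, exploiting the fact that restricting the inequality to $x\in\mathcal{E}$ (instead of all of $\mathbb{R}^n$, as in Theorem~\ref{ch5:ellipthm}) makes the concavity hypothesis on $(f_d(x))^TQf_d(x)$ and the appeal to the nonlinear Farkas lemma or the $S$-lemma unnecessary. So I would not invoke any Theorem of Alternatives here at all; the proof is a direct verification in both directions.

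For sufficiency, I would assume some $\beta\ge0$ satisfies (\ref{ellip312}) on $\mathcal{E}$ and rewrite it as
\begin{equation*}
1-(f_d(x))^TQf_d(x)\ \ge\ \beta\bigl(1-x^TQx\bigr),\qquad x\in\mathcal{E}.
\end{equation*}
For $x\in\mathcal{E}$ one has $1-x^TQx\ge0$, and $\beta\ge0$, so the right-hand side is nonnegative; hence $(f_d(x))^TQf_d(x)\le1$, i.e.\ $f_d(x)\in\mathcal{E}$. Since this holds for every $x\in\mathcal{E}$, the set $\mathcal{E}$ is an invariant set for (\ref{slnlsdis}). For necessity, I would observe that invariance of $\mathcal{E}$ means exactly that $x\in\mathcal{E}$ implies $(f_d(x))^TQf_d(x)\le1$, and this is precisely (\ref{ellip312}) with the choice $\beta=0$; so $\beta=0$ is a valid witness, which closes the proof.

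The honest remark is that there is essentially no obstacle: the mathematical content of the theorem is the change of the quantifier domain from $\mathbb{R}^n$ (in Theorem~\ref{ch5:ellipthm}) to $\mathcal{E}$, and once the inequality is only required on $\mathcal{E}$ the witness $\beta=0$ is trivial, exactly as $H=0$ is the trivial witness in Theorem~\ref{gen1}. I would keep Theorems~\ref{ch5:ellipthm} and~\ref{ellipthm12} clearly distinguished (the former being the genuinely nontrivial statement, valid on all of $\mathbb{R}^n$ under the concavity assumption), and, paralleling the discussion after Theorem~\ref{gen1}, I would add a short remark pointing out the difference between conditions (\ref{ch5:ellip3}) and (\ref{ellip312}) and stating the associated max-min optimization problem with the inner minimization taken over $x\in\mathcal{E}$ rather than over $\mathbb{R}^n$.
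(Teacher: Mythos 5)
Your proof is correct and matches the paper's own argument: the sufficiency direction rewrites (\ref{ellip312}) as $1-(f_d(x))^TQf_d(x)\geq \beta(1-x^TQx)\geq 0$ for $x\in\mathcal{E}$, and necessity takes the trivial witness $\beta=0$. The added remarks on the quantifier domain and the parallel with Theorem~\ref{gen1} are accurate but not needed for correctness.
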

\begin{proof}
\emph{Sufficiency:} Condition (\ref{ellip312}) can be reformulated as $1-(f_d(x))^TQf_d(x)\geq \beta(1-x^TQx)$, where $x\in \mathcal{E}$. Thus we have $1-(f_d(x))^TQf_d(x)\geq0$, i.e., $f_d(x)\in \mathcal{E}$. Thus $\mathcal{E}$ is an invariant set for the discrete system.
\emph{Necessity:} It is immediate by choosing $\beta=0.$
\end{proof}

\begin{remark}\label{remgen2}
Consider the following  optimization problem
\begin{equation}\label{slvn315}
\max_{\beta\geq0}\min_{x}\{\beta x^TQx-(f_d(x))^TQf_d(x)- \beta+1\,|\,x^TQx\leq 1\}.
\end{equation}
If the optimal objective value of   optimization problem (\ref{slvn315}) is nonnegative,  then  condition (\ref{ellip312}) holds.

\end{remark}

We now present an invariance condition for convex sets and for discrete systems. In this invariance condition, no assumption is needed for convex sets. 

\begin{theorem}\label{zhthm12}
Let the convex set $\mathcal{S}$ be given as in  (\ref{1zhcov1}) and let the discrete system be given as in (\ref{slnlsdis}). Then $\mathcal{S}$
is an invariant set for the discrete system  if and
only if there exists an $\alpha\geq0$, such that
\begin{equation}\label{zheq1212}
\alpha g(x)-g(f_d(x))\geq0, ~~~~\text{ for all } x\in \mathcal{S}.
\end{equation}
\end{theorem}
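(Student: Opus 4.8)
The plan is to mirror the structure of the proof of Theorem~\ref{gen1} (the polyhedral analogue), since the statement of Theorem~\ref{zhthm12} drops the Slater and concavity hypotheses of Theorem~\ref{zhthm11} and correspondingly restricts the inequality to hold only on $\mathcal{S}$ rather than on all of $\mathbb{R}^n$. The key observation that makes this work without any Theorem-of-Alternatives machinery is that, once we only require the inequality on $\mathcal{S}$, the multiplier $\alpha$ can be taken to be $0$ in the necessity direction and the sufficiency direction becomes a one-line sign argument.

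For \emph{sufficiency}, suppose there is $\alpha\geq0$ with $\alpha g(x)-g(f_d(x))\geq0$ for all $x\in\mathcal{S}$. Take any $x\in\mathcal{S}$, so $g(x)\leq0$. Since $\alpha\geq0$ we get $\alpha g(x)\leq0$, hence $g(f_d(x))\leq\alpha g(x)\leq0$, i.e.\ $f_d(x)\in\mathcal{S}$. As this holds for every $x\in\mathcal{S}$, an easy induction on $k$ (using Definition~\ref{def1}) shows every forward trajectory starting in $\mathcal{S}$ stays in $\mathcal{S}$, so $\mathcal{S}$ is an invariant set.

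For \emph{necessity}, assume $\mathcal{S}$ is an invariant set for (\ref{slnlsdis}). Then $x\in\mathcal{S}$ implies $f_d(x)\in\mathcal{S}$, i.e.\ $g(x)\leq0$ implies $g(f_d(x))\leq0$. Choosing $\alpha=0$, the left-hand side of (\ref{zheq1212}) becomes $-g(f_d(x))$, which is nonnegative for all $x\in\mathcal{S}$ precisely because $g(f_d(x))\leq0$ there. Hence $\alpha=0$ witnesses (\ref{zheq1212}), completing the proof.

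I do not expect a genuine obstacle here; the content of the theorem is entirely in the \emph{restriction of the domain} to $\mathcal{S}$, which trivializes both directions. The only point worth a sentence of care is that, unlike Theorem~\ref{zhthm11}, we do not need $g$ convex, $g\circ f_d$ concave, or a Slater point — the proof above uses none of these, so the statement is genuinely assumption-free, exactly as claimed in the surrounding text and in parallel with Theorems~\ref{gen1} and \ref{ellipthm12}.
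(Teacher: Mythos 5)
Your proof is correct and follows essentially the same route as the paper: the sufficiency direction rewrites \eqref{zheq1212} as $g(f_d(x))\leq\alpha g(x)\leq 0$ for $x\in\mathcal{S}$, and the necessity direction takes $\alpha=0$. The only (harmless) addition is your explicit induction on $k$, which the paper leaves implicit.
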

\begin{proof}
\emph{Sufficiency:} Condition (\ref{zheq1212}) can be reformulated as $\alpha g(x)\geq g(f_d(x))$, where $x\in \mathcal{S}$, i.e., $g(x)\leq0$. According to $\alpha\geq0,$ we have $g(f_d(x))\leq0$, i.e., $f_d(x)\in \mathcal{S}$. Thus $\mathcal{S}$ is an invariant set for the discrete system.
\emph{Necessity:} It is immediate by choosing $\alpha=0.$
\end{proof}

\begin{remark}\label{remgen3}
Consider the following optimization problem:
\begin{equation}\label{svop315}
\max_{\alpha\geq0}\min_{x\in\mathbb{R}^n}\{\alpha g(x)-g(f_d(x))\}.
\end{equation}
If the optimal objective value of  optimization problem (\ref{svop315}) is nonnegative, then condition (\ref{zheq1212}) holds.
\end{remark}

We note that  there are no assumptions in Theorem \ref{gen1}, \ref{ellipthm12}, and \ref{zhthm12}, which means we cannot use the Wolfe duality theory. Thus we cannot transform the ``max-min" optimization problems in Remark \ref{remgen1}, \ref{remgen2}, and \ref{remgen3} into nonlinear maximization problems. The absence of convexity assumptions makes the theorems more applicable, however the nonlinear feasibility problems 
(\ref{slede912}), (\ref{ellip312}), and (\ref{zheq1212}) are nonconvex,  thus their verification is significantly harder than solving convex feasibility problems. \textbf{As we pointed out in introduction that there are very few papers studying invariance conditions for nonlinear systems, the  nonlinear feasibility problems 
(\ref{slede912}), (\ref{ellip312}), and (\ref{zheq1212}) provide us a novel perspective to consider invariance conditions.  They also bring the possibility of applying state-of-the-art optimization algorithms to solve invariance problems.}

\section{Conclusions}
In this paper we derived invariance conditions for some classical sets for nonlinear dynamical systems by utilizing  a methodology analogous to the one presented in \cite{song1}. This is motivated by the fact that most problems in the real world are modeled by nonlinear dynamical systems, because they often show nonlinear characteristics. 
The Theorems of Alternatives, i.e., the nonlinear Farkas lemma and the \emph{S}-lemma, together with 
Nagumo's Theorem  are our main tools to derive invariance conditions for discrete and continuous systems. We derive the invariance conditions for these classic sets for nonlinear systems with some, and without any conditions.
We also propose an optimization problem for each invariance condition. Then to verify the invariance condition is equivalent to solve the corresponding optimization problem. These invariance conditions provide potential ways to design algorithms to establish invariant sets for a system. The introduction of the associated optimization problem opens new avenues to use advanced optimization algorithms and software to solve invariant set problems. 

\section*{Acknowledgments}
{This research is supported by a Start-up grant of Lehigh University, and by
TAMOP-4.2.2.A-11/1KONV-2012-0012: Basic research for the development of
hybrid and electric vehicles. The TAMOP Project is supported by the European Union
and co-financed by the European Regional Development Fund.}

\bibliographystyle{plain}
\bibliography{generalref}

\section{Appendix}
\begin{theorem} \emph{\cite{bits2, song1}}
Let  $\mathcal{P}=\{x\,|\, Gx\leq b\}$ be a polyhedron, where $G\in \mathbb{R}^{m\times n}$ and $b\in \mathbb{R}^m.$ Let the discrete system, given as in   (\ref{slnlsdis}), be linear, i.e., $f(x)=Ax$. Then $\mathcal{P}$ is an invariant set for the discrete
system (\ref{slnlsdis}) if and only if there exists a matrix $H\geq0$, such that $HG=GA$ and $Hb\leq b.$
\end{theorem}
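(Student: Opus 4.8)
The plan is to reduce invariance to a family of $m$ linear programs, one per facet of $\mathcal{P}$, and then to extract the matrix $H$ from the optimality (KKT) conditions of these LPs. Since $f(x)=Ax$, the set $\mathcal{P}$ is invariant if and only if $Gx\leq b$ implies $GAx\leq b$; splitting this into rows, it is equivalent to the statement that for every $i\in\mathcal{I}(m)$ we have $G_i^TAx\leq b_i$ for all $x$ with $Gx\leq b$. The ``if'' direction needs no optimality theory at all: if $H\geq0$ satisfies $HG=GA$ and $Hb\leq b$, then for any $x\in\mathcal{P}$ we get $GAx=HGx\leq Hb\leq b$, where the first inequality uses $H\geq0$ together with $Gx\leq b$; hence $Ax\in\mathcal{P}$ and $\mathcal{P}$ is invariant.

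For the ``only if'' direction, assume $\mathcal{P}$ is invariant and fix $i\in\mathcal{I}(m)$. Consider the linear program $\max\{G_i^TAx\,|\,Gx\leq b\}$. Its feasible region is $\mathcal{P}$, which is nonempty by the standing assumption that $\mathcal{P}$ has an interior point, and its objective is bounded above by $b_i$ on $\mathcal{P}$ precisely because $\mathcal{P}$ is invariant; hence the optimum is attained at some $x^{(i)}\in\mathcal{P}$. Writing the KKT conditions for this LP, there is a multiplier vector $H_i\geq0$, $H_i\in\mathbb{R}^m$, with stationarity $G^TH_i=A^TG_i$ and complementary slackness $H_i^T(b-Gx^{(i)})=0$. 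Stacking the rows $H_i^T$ into a matrix $H$, the stationarity conditions are exactly $HG=GA$ and $H\geq0$ holds entrywise. For the bound on $Hb$, complementary slackness gives $H_i^Tb=H_i^TGx^{(i)}=(G^TH_i)^Tx^{(i)}=(A^TG_i)^Tx^{(i)}=G_i^TAx^{(i)}$, and since $x^{(i)}\in\mathcal{P}$ and $\mathcal{P}$ is invariant we have $Ax^{(i)}\in\mathcal{P}$, i.e.\ $G_i^TAx^{(i)}\leq b_i$; therefore $H_i^Tb\leq b_i$. Collecting over all $i$ yields $Hb\leq b$, completing the proof.

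The point requiring care is the existence of the optimizer $x^{(i)}$, and this is exactly where invariance enters: it guarantees the objective $G_i^TAx$ is bounded above on the nonempty feasible set $\mathcal{P}$, so by the fundamental theorem of linear programming the optimum is attained even when $\mathcal{P}$ is unbounded. Equivalently, one can bypass attainment and invoke strong LP duality directly: the dual of the $i$-th LP is $\min\{H_i^Tb\,|\,G^TH_i=A^TG_i,\ H_i\geq0\}$, and boundedness of the primal forces the dual to be feasible with the same optimal value $\leq b_i$. I expect no further obstacles; in contrast to the proof of Corollary \ref{chp5:cor1} via the convex Farkas lemma, this argument uses only LP duality/optimality conditions and needs no convexity hypothesis on $b_i-G_i^Tf_d(x)$, which is automatic in the linear case.
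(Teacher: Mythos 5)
Your proof is correct and follows essentially the same route as the paper's appendix argument: reduce invariance to the $m$ linear programs $\max\{G_i^TAx\,|\,Gx\leq b\}$ and extract $H$ from LP optimality conditions, with your KKT/complementary-slackness derivation being just an equivalent phrasing of the paper's appeal to strong duality. If anything, your write-up is the more complete of the two, since you also verify the sufficiency direction and address attainment of the optimum explicitly.
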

\begin{proof}
We have that $\mathcal{P}$ is an invariant set for the linear system if and only if the optimal objective values of the following $n$ linear optimization problems are all nonnegative: 
\begin{equation}\label{eqlab3}
\min \{b_i-G_i^TAx\,|\,Gx\leq b\}~~~ i\in\mathcal{I}(m).
\end{equation}
Problems (\ref{eqlab3}) are equivalent to 
\begin{equation}\label{eqlab4}
-b_i+\max G_i^TAx, \emph{ s.t. } Gx\leq b~~~ i\in\mathcal{I}(m).
\end{equation}
The duals of these linear optimization problems presented in (\ref{eqlab4}) are for all $i\in \mathcal{I}(m)$
\begin{equation}\label{kktod2}
\begin{split}
-b_i+\min &~ b^TH_i\\
\text{s.t. }&G^TH_i=A^TG_i\\
&H_i\geq0.
\end{split}
\end{equation}
Due to the Strong Duality Theorem of linear optimization, see, e.g., \cite{Roos}, the primal and dual objective function values are equal at optimal solutions, thus $G_i^TAx=b^TH_i.$ As the optimal value of (\ref{eqlab3}) is nonnegative for all $i\in\mathcal{I}(m)$, we have $b_i-b^TH_i\geq0$. Thus $b\geq Hb$. The proof is complete. 
%
%
%
\end{proof}

\end{document}